\theoremstyle{definition}
\newtheorem{definition}{Definition}[section]
\newtheorem{remark}[definition]{Remark}
\theoremstyle{plain}
\newtheorem{Proposition}[definition]{Proposition}
\newtheorem{Theorem}[definition]{Theorem}
\newtheorem{lemma}[definition]{Lemma}
\newtheorem{proposition}[definition]{Proposition}
\newtheorem{corollary}[definition]{Corollary}
\newenvironment{tabsection}{}{}
\newcommand{\loc}{\ensuremath{ \op{loc}}}
\newcommand{\Lf}{\ensuremath{\mathbf{L}}}
\newcommand{\argument}{\ensuremath{\mathinner{\;\cdot\;}}}
\newcommand{\op}[1]{\ensuremath{\operatorname{#1}}}
\newcommand{\wt}[1]{\ensuremath{\widetilde{#1}}}
\newcommand{\wh}[1]{\ensuremath{\widehat{#1}}}
\newcommand{\ul}[1]{\ensuremath{\underline{#1}}}
\newcommand{\cG}{\ensuremath{\mathcal{G}}}
\newcommand{\cV}{\ensuremath{\mathcal{V}}}
\newcommand{\fz}{\ensuremath{\mathfrak{z}}}
\newcommand{\fk}{\ensuremath{\mathfrak{k}}}
\newcommand{\fg}{\ensuremath{\mathfrak{g}}}
\newcommand{\fh}{\ensuremath{\mathfrak{h}}}
\newcommand{\mf}[1]{\ensuremath{\mathfrak{#1}}}
 \newcommand{\bC}{\ensuremath{\mathbb{C}}}
 \newcommand{\R}{\ensuremath{\mathbb{R}}}
 \newcommand{\Z}{\ensuremath{\mathbb{Z}}}
 \newcommand{\K}{\ensuremath{\mathbb{K}}}
\newcommand{\bS}{\ensuremath{\mathbb{S}}}
\newcommand{\id}{\ensuremath{\operatorname{id}}}
\newcommand{\pr}{\ensuremath{\operatorname{pr}}}
\newcommand{\ev}{\ensuremath{\operatorname{ev}}}
\newcommand{\ad}{\ensuremath{\operatorname{ad}}}
\newcommand{\Obj}{\ensuremath{\operatorname{Obj}}}
\newcommand{\out}{\ensuremath{\operatorname{out}}}
\DeclareMathOperator{\Lift}{Lift}
\newcommand{\cat}[1]{\ensuremath{\mathsf{\mathop{#1}}}}
\newcommand{\der}{\ensuremath{\operatorname{der}}}
\newcommand{\Diff}{\ensuremath{\operatorname{Diff}}}
\newcommand{\Hilb}{\ensuremath{\operatorname{Hilb}}}
\newcommand{\im}{\ensuremath{\operatorname{im}}}
\newcommand{\coker}{\ensuremath{\operatorname{coker}}}
\newcommand{\Spin}{\ensuremath{\operatorname{Spin}}}
\newcommand{\End}{\ensuremath{\operatorname{End}}}
\newcommand{\se}{\ensuremath{\nobreak\subseteq\nobreak}}
\newcommand{\from}{\ensuremath{\nobreak\colon\nobreak}}
\renewcommand{\to}{\ensuremath{\nobreak\rightarrow\nobreak}}
\newcommand{\nbinto}{\ensuremath{\nobreak\hookrightarrow\nobreak}}
\newcommand{\sprod}{\ensuremath{\mathopen{\langle}\mathinner{\cdot},\mathinner{\cdot}\mathclose{\rangle}}}
\newcommand{\toto}{\ensuremath{\nobreak\rightrightarrows\nobreak}}
\def\raisefix#1{%
  \ifx#1\displaystyle
    \raise.39ex
  \else
    \ifx#1\textstyle
      \raise.39ex
    \else
      \ifx#1\scriptstyle
        \raise.275ex
      \else
        \raise.150ex
      \fi
    \fi
  \fi
}
\def\stylefix#1{%
  \ifx#1\displaystyle
    \scriptstyle
  \else
    \ifx#1\textstyle
      \scriptstyle
    \else
      \ifx#1\scriptstyle
        \scriptscriptstyle
      \else
        \scriptscriptstyle
      \fi
    \fi
  \fi
}
\DeclareFontFamily{U}{mathx}{\hyphenchar\font45}
\DeclareFontShape{U}{mathx}{m}{n}{
      <5> <6> <7> <8> <9> <10>
      <10.95> <12> <14.4> <17.28> <20.74> <24.88>
      mathx10
      }{}
\DeclareSymbolFont{mathx}{U}{mathx}{m}{n}
\DeclareMathAccent{\widecheck}{0}{mathx}{"71}
\DeclareMathAccent{\wideparen}{0}{mathx}{"75}
\setlist[enumerate]{label={\alph*})}
\newcommand{\xyhookrightarrow}[1]{\ar@{}[r]|-*[@]{\xhookrightarrow{\hphantom{\hspace{#1}}}}}
\providecommand{\cat}[1]{\mathsf{\mathop{#1}}}
\newcommand{\cohLie}{\cat{cLie}$-$\cat{2Grp}}
\newcommand{\LieGrpd}{\cat{LieGrpd}}
\newcommand{\Vect}{\cat{Vect}}
\newcommand{\Rep}{\cat{Rep}}
\newcommand{\Cat}{\cat{Cat}}
\newcommand{\canCocycle}{\ensuremath{\mathopen{\langle}\mathinner{\cdot},\mathopen{[}\mathinner{\cdot},\mathinner{\cdot}\mathclose{]}\mathclose{\rangle}}}
\newcommand{\cent}{\ensuremath{\op{cent}}}
\newcommand{\etalchar}[1]{$^{#1}$}
\begin{document}

\title[Quasi-periodic paths and a string 2-group model]{Quasi-periodic paths and a string 2-group model from the free loop group}
\thanks{The authors acknowledge support under the  Australian Research Council's \emph{Discovery Projects} funding scheme (project numbers DP120100106 and DP130102578)}

\author{Michael Murray}
\address[M.K.\ Murray]{School of Mathematical Sciences, 
University of Adelaide,
Adelaide, SA 5005, 
Australia}
\email{michael.murray@adelaide.edu.au}

\author[D.M.\ Roberts]{David Michael Roberts}
\address[D.M.\ Roberts]{School of Mathematical Sciences, 
University of Adelaide,
Adelaide, SA 5005, 
Australia}
\email{david.roberts@adelaide.edu.au}

\author[C.\ Wockel]{Christoph Wockel}
\address[C.\ Wockel]
{Department of Mathematics, University of Hamburg (at the time of writing)}
\email{christoph@wockel.eu}

\subjclass[2010]{22E67, 18D35, 22A22, 53C08, 81T30}

\keywords{Lie 2-group, string 2-group, loop group, Lie groupoid}

\begin{abstract}
In this paper we address the question of the existence of a model for the
 string 2-group as a strict Lie-2-group using the free loop group $L\Spin$
  (or more generally $LG$ for compact simple simply-connected Lie groups $G$).
   Baez--Crans--Stevenson--Schreiber constructed a model for
 the string 2-group using a based loop group.
 This has the deficiency that it does not admit an action of the circle group
 $S^{1}$, which is of crucial importance, for instance in the construction of a
 (hypothetical) $S^{1}$-equivariant index of (higher) differential operators.
 The present paper shows that there are in fact obstructions for constructing a
 \emph{strict} model for the string 2-group using $LG$. We show that a  certain infinite-dimensional 
 manifold of smooth paths admits no Lie group structure, and that there are no nontrivial Lie crossed 
 modules analogous to the BCSS model using the universal central extension of the \emph{free} loop group. 
 Afterwards, we construct the next best thing, namely a  \emph{coherent} model for the string 2-group using the free loop group, with explicit formulas for all structure.
 This is in particular important for the expected representation theory of the string group that we discuss briefly in the end.
\end{abstract}

\maketitle

\section*{Introduction} 

The string group is one of the key examples, and motivation from applications, of higher differential geometry. 
It was originally conceived as a homotopical object: the 3-connected cover of a given compact simple simply-connected Lie group $G$ (and named by Haynes Miller). There is no finite-dimensional Lie group that can be this cover, since such a group necessarily has, as kernel of the map to $G$, a closed subgroup that has non-trivial second homotopy group. Since closed subgroups of finite-dimensional Lie groups are automatically Lie subgroups (and clearly finite-dimensional!), this contradicts Cartan's famous result that each finite-dimensional Lie group has vanishing second homotopy group.
One is thus forced to seek more exotic smooth objects to play the r\^ole of the string group in geometry, and there have been several different constructions of \emph{models} of the string group, namely by smooth categorical groups, or \emph{Lie 2-groups}. 

There have been models by a strict Lie 2-group (or equivalently a Lie crossed module) \cite{BaezCransStevensonSchreiber07From-loop-groups-to-2-groups}; 
by a simplicial Banach manifold after higher Lie integration of an $L_\infty$-algebra \cite{Henriques08Integrating-Loo-algebras}; by a finite-dimensional `stacky' Lie group \cite{Schommer-Pries10Central-Extensions-of-Smooth-2-Groups-and-a-Finite-Dimensional-String-2-Group}; 
by a differentiable group stack in cohesive $\infty$-topos theory (eg \cite{FiorenzaSchreiberStasheff12Cech-cocycles-for-differential-characteristic-classes}); 
by a transgression/regression of the Gawedzki--Reis/Meinrenken basic gerbe \cite{Waldorf12A-construction-of-string-2-group-models-using-a-transgression-regression-technique}; 
and by an infinite-dimensional plain Lie group, as well as a Lie 2-group involving this group \cite{NikolausSachseWockel13A-smooth-model-for-the-string-group}.
One should think of all of these models as analogous to different choices of coordinates on a higher-geometric object. 

What all these models have in common is that they are tuned to a particular application, or are displaying a particular desirable feature at the cost of losing out in other ways.
For instance, most of the above models are infinite-dimensional, and are built from Fr\'echet manifolds. Henriques' model uses Banach manifolds, which are better behaved, but is only a reduced simplicial manifold, rather than an actual groupoid with multiplication functor. 
The finite-dimensional model of Schommer-Pries has only a multiplication anafunctor/bibundle, rather than an honest functor of Lie groupoids.
The group stack of Schreiber et al.\ is defined entirely canonically (in fact by a universal property), but is an object of a higher topos and so needs a choice of presentation in order to do hard geometry in local coordinates.

It is to this list of models we wish to offer one more, built using the free smooth loop group. 
Recall that \cite{BaezCransStevensonSchreiber07From-loop-groups-to-2-groups} consider the basic gerbe on a compact simple simply-connected Lie group $G$, 
using the path fibration and the group of  smooth maps $[0,1] \to G$ with both endpoints being mapped to the identity.  
One desideratum for using the free loop group is that it has a smooth action of the circle group by rotation.
Indeed, the origin of \emph{string structures} in the 1980s (since reinterpreted using the string group) in work of Killingback used loop spaces and bundles for loop groups, and Witten's subsequent work \cite{Witten87Elliptic-genera-and-quantum-field-theory} on the topic considered a hypothetical circle-equivariant Dirac operator on a loop space $LM$ of a manifold $M$ and its index.
The definition of such an operator is extremely difficult, and Taubes' rigorous construction \cite{Taubes89S1-actions-and-elliptic-genera} only worked over a formal neighbourhood of the constant loops inside the full loop space (in fact, on the normal bundle to $M\subset LM$).
Having a string group-related construction that linked in any way to Witten's physico-mathematical arguments is thus very desirable.

The necessity of a meaningful $S^1$-action on (one of) the `ingredients' of a string group model in question naturally leads to the question whether there exists a strict Lie 2-group model that has the full loop group $LG$ built in.
If one considers a model as being like a choice of coordinates, the existence of a circle action is like choosing a particularly symmetric system of charts.
Originally, the authors tried to build this model but did not succeed. 
After a while it became clear that there is some obstruction for this, which is manifested in the relatively small outer automorphism group of the full loop group. 
The latter is just the diffeomorphism group of $S^1$ (times a discrete group), which is homotopy equivalent to the subgroup of rigid rotations and hence to $S^1$ itself. 
Since crossed modules are basically pull-backs of homomorphisms into the outer automorphism group, this shows (at least morally) that such a homomorphism cannot change the homotopy type in such a way that the third homotopy group gets killed.
This argument is presented in detail below and shows that there \emph{cannot} exist a model for the string group as a Lie 2-group that is built from the full loop group.
More precisely, we prove that there is no nontrivial crossed module $\tau\from \wh{LG}\to H$ with kernel $U(1)$ and cokernel $G$ (for any Lie group $H$). 
This is in contrast to the BCSS crossed module $\wh{\Omega G} \to PG$, which is nontrivial and with the given kernel/cokernel.

Instead of insisting on a strict Lie 2-group built using $LG$, we instead found a Lie groupoid, here denoted $S(QG)$ (see Definition~\ref{def:SQG}), that is equivalent to the underlying Lie groupoid of a different strict Lie 2-group (see Definition~\ref{def:QstarflatQG}). 
This strict 2-group is morally similar to the BCSS strict 2-group model \cite{BaezCransStevensonSchreiber07From-loop-groups-to-2-groups}, but instead of using the full based path space $PG$, we use a Fr\'echet manifold diffeomorphic to the submanifold of $PG$ consisting of based paths that are \emph{flat} at both endpoints.
Much as a diffeomorphism of a manifold $M$ with (the manifold underlying) a Lie group induces a unique Lie group structure of $M$, we use the equivalence mentioned above to induce an essentially unique 2-group structure on $S(QG)$.
Since we are transferring structure across an \emph{equivalence} and not an isomorphism, the transferred 2-group structure is not strict, but merely \emph{coherent}; the usual group axioms only hold up to specified isomorphism, and these isomorphisms satisfy coherence axioms.
We give explicit formulas for the structure maps of the 2-group, given a choice of a smoothed step function (which can itself be written down explicitly, if so chosen).

In summary we proceed as follows.  Section 1 explains the differential geometry of the space of quasi-periodic paths. In Section 2 we prove
the non-existence of \emph{strict} Lie $2$-group models for the string group using the central extension of the free loop group. The final Section 3 explains how to contruct an explicit coherent Lie $2$-group model, and show that our method gives an essentially unique 2-group structure on the Lie groupoid $S(QG)$.  We conclude with some comments on 2-representations of the coherent $2$-group that we have constructed, including some conjectural remarks on the relation to the Freed--Hopkins--Teleman theorem.

 The authors are grateful to Karl-Hermann Neeb for pointing out the reference
 \cite{Hofmann65Lie-algebras-with-subalgebras-of-co-dimension-one} in the
 classification of infinitesimal actions of semi-simple Lie algebras on the
 circle. DMR thanks Chris Schommer-Pries for providing the helpful answer \cite{SchommerPriesMO} to his MathOverflow question, and those who responded on the \texttt{categories} mailing list to correct a misunderstanding about lifting adjunctions. The authors also thank the referee for useful comments and corrections.

\section*{Notation and Conventions} %

\begin{tabsection}
 A manifold (respectively Lie group) will throughout be a topological space
 modelled on open subsets of a locally convex vector space with smooth chart
 changes in the sense of
 \cite{Neeb06Towards-a-Lie-theory-of-locally-convex-groups}. A Lie groupoid
 will be a groupoid with surjective submersions\footnote{See \cite{Glockner15Fundamentals-of-submersions-and-immersions-between-infinite-dimensional-manifolds} for an overview of submersion properties for locally convex manifolds. Most importantly, this means that submersions are locally split on charts, not merely on tangent spaces.} as source and target map and
 smooth structure maps otherwise. These are the objects of the strict
 2-category $\LieGrpd$ with morphisms the smooth functors and 2-morphisms the
 smooth natural transformations
 \cite{Wockel13Infinite-dimensional-and-higher-structures-in-differential-geometry}.
 A coherent Lie 2-group is a coherent 2-group in $\LieGrpd$ in the sense of
 \cite[Definition 19]{BaezLauda04Higher-dimensional-algebra.-V.-2-groups}.
 These are the objects of the strict 2-category $\cohLie$ of coherent 2-groups
 in $\LieGrpd$ (cf.\ \cite[Section
 7]{BaezLauda04Higher-dimensional-algebra.-V.-2-groups}). If $G$ is a simple
 compact and 1-connected Lie group with Lie algebra $\fg$, then we denote by
 $\sprod\from S^{2}\fg\to \R$ a multiple of the Killing form on $\fg$ which makes the left invariant form 
 $\omega\in \Omega^{3}(G)$  with value
 $\canCocycle$ on $\Lambda^{3}\fg$ a generator of $H^3(G, \Z)$.    We will always identify $S^{1}$ with
 $\R/\Z$ and $U(1)$ with $\{z\in\bC\mid z\bar{z}=1\}$. The base-point of $G$ is $e$ or $1$,
 of $U(1)$ is $1$ and of $S^{1}$ is $0+\Z$.
\end{tabsection}

\section{The geometry of quasi-periodic paths} %
\label{sec:path_groups_loop_groups_and_central_extensions}

\begin{tabsection}
 In this section we collect some background material that is necessary for the
 constructions later on. In particular, we discuss the path-loop fibration of
 $G$ consisting of smooth paths and loops (whereas the loops in the model from
 \cite{BaezCransStevensonSchreiber07From-loop-groups-to-2-groups} are everywhere continuous, but only smooth away from the base-point).  What we gain from this in the end will be a circle action
 on the free loop group $LG$. This will be important when talking about
 representations later on.
 
 Unless mentioned otherwise, $G$ may be an arbitrary (possibly  
 infinite-dimensional) Lie group throughout this section.
\end{tabsection}

\begin{definition}
 The free loop group $LG$ of $G$ is the space of periodic paths
 \begin{equation*}
  LG:=\{\eta\in C^{\infty}(\R,G)\mid \gamma(t+1)=\gamma(t)\text{ for all }t\in \R\}\cong C^{\infty}(S^{1},G).
 \end{equation*}
 We endow this with the topology of uniform convergence of all derivatives on
 compact subsets, which turns it into a Fr\'echet--Lie group
 \cite{Glockner02Lie-group-structures-on-quotient-groups-and-universal-complexifications-for-infinite-dimensional-Lie-groups,Wockel13Infinite-dimensional-and-higher-structures-in-differential-geometry}.
 The quasi-periodic paths are given by
 \begin{equation*}
  QG:=\{\gamma\in C^{\infty}(\R,G)\mid \gamma(t+1)\cdot \gamma(t)^{-1}\text{ is constant}\},
 \end{equation*}
 which is for now only a set (we will endow it with a smooth structure below).
 This is the natural space of ``smooth paths'' in $G$ that has $LG$ as the
 subspace of ``periodic paths''. 
\end{definition}

This definition has appeared in \cite[Proof of Proposition~5.1]{AtiyahSegal04Twisted-K-theory} for the special case of $G$ being compact and in \cite{AlekseevMeinrenken08The-Atiyah-algebroid-of-the-path-fibration-over-a-Lie-group} for $G$ being finite-dimensional.
Notice also that if one defines the left logarithmic derivative by 
\begin{equation} \label{eq:leftlogder}
\delta^{l}\from QG\to \Omega^{1}(\R,\fg),\quad \gamma\mapsto \delta^{l}(\gamma)=\gamma^{-1}\cdot d\gamma
\end{equation}
then $\gamma \in QG$ precisely when $\delta^l(\gamma)$ is periodic, i.e., $\delta^l(\gamma)\in\Omega^{1}(\R,\fg)^{\Z}\cong \Omega^{1} (S^{1},\fg)$. 
If $G$ is Milnor regular the map $QG  \to \Omega^1(S^1,\fg) \times G$ obtained from the left logarithmic derivative and the initial value is a diffeomorphism.  
Our results however hold in more generality unless stated, so we do not use this fact.

\begin{remark}\label{rem:QG}
 For $\gamma\in QG$ the constant $\gamma(t+1)\cdot \gamma(t)^{-1}$ is given by
 $\gamma(1)\cdot \gamma(0)^{-1}$ and this  yields a map
 \begin{equation*}
  \Theta\from QG\to G,\quad \gamma\mapsto \gamma(1)\cdot\gamma(0)^{-1}.
 \end{equation*}
 Thus we have $\gamma(t+1)=\Theta(\gamma)\cdot \gamma(t)$ for $\gamma\in Q$.
 However, note that $QG$ does not carry a natural multiplication. If we
 multiply paths point-wise, then the result might not be quasi-periodic.
 Indeed, a necessary condition for $\gamma\in C^{\infty}(\R,G)$ to be
 quasi-periodic is that
 \begin{equation*}
  \dot \gamma (1)= \gamma(1)\cdot \gamma(0)^{-1}\cdot \dot \gamma(0).
 \end{equation*}
 This is not satisfied for the product of two arbitrary quasi-periodic paths.
 \end{remark}

 We shall now discuss the smooth structure on $QG$. The topology on $QG$ is the
 topology of uniform convergence of all derivatives on compact subsets, which
 renders $\Theta$ continuous. Clearly, the fibre of $\Theta$ over $1$ is $LG$.

\begin{Proposition}\label{prop:QG_is_principal_LG_bundle}
 The map $\Theta\from QG\to G$ makes $QG$ a continuous principal $LG$-bundle
 over $G$, where $LG$ acts by right multiplication  on $QG$. Moreover, the trivialisation changes of this bundle are smooth, giving rise to a smooth structure on $QG$ turning $\Theta\from QG\to G$ into a smooth principal $LG$-bundle.
\end{Proposition}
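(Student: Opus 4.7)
My plan splits into three stages: establishing the set-theoretic principal action, constructing continuous local sections, and checking smoothness of the trivialisation changes.

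First I would verify the algebraic content. For $\eta\in LG$, right multiplication preserves $QG$ and the fibres of $\Theta$, since
\[
(\gamma\eta)(t+1)(\gamma\eta)(t)^{-1}=\gamma(t+1)\eta(t+1)\eta(t)^{-1}\gamma(t)^{-1}=\gamma(t+1)\gamma(t)^{-1}=\Theta(\gamma),
\]
and the pointwise action is free. Transitivity on fibres is dual: if $\Theta(\gamma_{1})=\Theta(\gamma_{2})$, then $\eta(t):=\gamma_{1}(t)^{-1}\gamma_{2}(t)$ satisfies $\eta(t+1)=\eta(t)$ by the same cancellation, so $\eta\in LG$ and $\gamma_{2}=\gamma_{1}\eta$. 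Since the image of a smooth map $\R\to G$ lies in a single connected component, $\Theta(QG)$ is contained in the identity component of $G$, so one may assume $G$ connected throughout.

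Second, for continuous local sections I would fix once and for all a smooth ``flat step function'' $\chi\colon\R\to[0,1]$ with $\chi\equiv 0$ near $(-\infty,1/3]$ and $\chi\equiv 1$ near $[2/3,\infty)$. Given $g_{0}\in G$, produce a reference path $\gamma_{0}\in\Theta^{-1}(g_{0})$ by choosing any smooth curve $p\colon[0,1]\to G$ from $1$ to $g_{0}$, reparametrising so that $p$ is constant near both endpoints, and extending by $\gamma_{0}(t+1):=g_{0}\gamma_{0}(t)$; the flatness at the endpoints is what makes $\gamma_{0}$ smooth across every integer. Then for $g=g_{0}\exp(X)$ in an exponential chart neighbourhood $g_{0}V$ of $g_{0}$, set $\beta_{X}(t):=\exp(\chi(t)\Ad(g_{0})X)$ on $[0,1]$ and extend by the recursion $\beta_{X}(t+1):=g_{0}\exp(X)\beta_{X}(t)g_{0}^{-1}$. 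Flatness of $\chi$ makes $\beta_{X}$ locally constant near each integer while $\chi(1)=1$ makes the values match there, so $\beta_{X}\in C^{\infty}(\R,G)$. A short calculation using $\gamma_{0}(t+1)=g_{0}\gamma_{0}(t)$ gives $s_{g_{0}}(g):=\beta_{X}\gamma_{0}\in QG$ with $\Theta(s_{g_{0}}(g))=g$, and the resulting continuous section $s_{g_{0}}\colon g_{0}V\to QG$ provides a continuous local trivialisation $\Theta^{-1}(g_{0}V)\cong g_{0}V\times LG$.

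Third, the trivialisation change between two such sections is the map $g\mapsto s_{g_{0}}(g)^{-1}s_{g_{0}'}(g)$ on $g_{0}V\cap g_{0}'V$. Its values lie in $LG$ by the transitivity established in the first stage, and smoothness as a map into $LG=C^{\infty}(S^{1},G)$ follows from the explicit formulas (using only $\log$, $\Ad$, multiplication in $G$, and the fixed function $\chi$) combined with standard smoothness results for composition and evaluation on spaces of smooth maps. The resulting smooth atlas equips $QG$ with a Fr\'echet manifold structure for which $\Theta\colon QG\to G$ is a smooth principal $LG$-bundle. I expect the only real obstacle to be smooth-gluing at the integers: a naive interpolation from $1$ to $g$ on $[0,1]$, quasi-periodically extended, generically yields only a continuous, not smooth, path, and it is precisely the flatness of $\chi$ near $0$ and $1$ that overcomes this throughout the construction.
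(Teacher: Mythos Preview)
Your argument is correct and shares the essential idea with the paper's proof: the obstruction to smoothly extending a path on $[0,1]$ to a quasi-periodic path on $\R$ is the jet at the endpoints, and a smooth step function that is flat near $0$ and $1$ kills this obstruction. Both proofs then reduce smoothness of the transition functions to the explicit formulas.

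The implementations differ in two respects worth noting. First, the paper builds the local section in one stroke via a chart $\varphi\colon U\to\fg$ around the identity: for $x\in gU$ it sets $\gamma_{g,x}(t)=g\,\varphi^{-1}\!\big(\rho(t)\,\varphi(g^{-1}x)\big)$ with $\rho$ a flat step function, and extends quasi-periodically. Your construction instead fixes a reference path $\gamma_{0}\in\Theta^{-1}(g_{0})$ (whose existence uses connectedness of $G$, which you correctly justify) and multiplies by a correction $\beta_{X}$ that is not itself quasi-periodic but is cooked up so that $\beta_{X}\gamma_{0}$ is. This works, but is a more elaborate bookkeeping than the paper's direct formula. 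Second, you phrase everything through the exponential map; since the proposition in the paper allows $G$ to be an arbitrary locally convex Lie group, $\exp$ need not be defined or a local diffeomorphism. This is easily repaired by replacing $\exp$ with a chart $\varphi$ as the paper does, and your formula $\exp(\chi(t)\Ad(g_{0})X)$ becomes $\varphi^{-1}(\chi(t)\,\varphi(\,\cdot\,))$ suitably translated, which is exactly what the paper writes down.
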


We then can give $QG$ the structure of a smooth manifold using the local triviality of this bundle.
\begin{proof}
 To this end, we first note that $LG$ acts continuously on $QG$ from the right
 via
 \begin{equation*}
  QG\times LG\to QG,\quad (\gamma,\eta)\mapsto \gamma\cdot \eta.
 \end{equation*}
 Now let $U\se G$ be an open identity neighbourhood and  let
 $\varphi\from U\to \varphi(U)\se \fg$ be a chart with $\varphi(1)=0$ and
 $\varphi(U)$ convex. Choose a smooth $\rho \colon [0, 1] \to [0, 1]$ with $\rho(0) = 0$, $\rho(1) = 1$ and  all derivatives of $\rho$ vanishing at $0$ and $1$.  Then for each $g\in G$ define a  smooth map
 \begin{equation*}
  F_{g}\from [0,1]\times gU\to G\quad \text{ by  }\quad F_g(t, x) = g \varphi^{-1}(\rho(t) \varphi(g^{-1} x)) .
 \end{equation*}
All derivatives of the curves $\gamma_{g,x}\from [0,1]\to G$,
 $t\mapsto F_{g}(t,x)$ vanish for $t=0$ and $t=1$ and all $x\in gU$ (we will
 usually suppress the subscript $g$ on $\gamma_{g,x}$ in the notation). Thus
 $\gamma_{x}$ may be extended to a quasi-periodic path $\R \to G$ satisfying
 $\gamma_{x}(t+1)=x\cdot \gamma_{x}(t)$. Moreover, $gU\to QG$,
 $x\mapsto \gamma_{x}$ is a continuous section of $\Theta$. This gives rise to
 a trivialisation
 \begin{equation*}
  \Phi_{F_{g}}\from  \Theta^{-1}(gU)\to gU\times LG,\quad \gamma\mapsto (\Theta(\gamma),\gamma_{\Theta(\gamma)}^{-1}\cdot \gamma).
 \end{equation*}
 Note that $\gamma_{\Theta(\gamma)}^{-1}\cdot \gamma$ is smooth since all
 derivatives of ${\gamma_{\Theta(\gamma)}}$ vanish for all $t\in\Z$. Moreover,
 \begin{align*}
  (\gamma_{\Theta(\gamma)}^{-1}\cdot \gamma)(t)= & \ \gamma_{\Theta(\gamma)}^{-1}(t)\cdot \gamma(t)=\gamma_{\Theta(\gamma)}^{-1}(t)\cdot \Theta(\gamma)^{-1}\cdot\gamma(t+1)\\
 = & \ \gamma_{\Theta(\gamma)}^{-1}(t+1)\cdot\gamma(t+1)=
  (\gamma_{\Theta(g)}^{-1}\cdot \gamma)(t+1)
 \end{align*}
 implies that $\gamma_{\Theta(\gamma)}^{-1}\cdot \gamma$ actually is an element
 of $LG$. An inverse of $\Phi_{F_{g}}$ is given by
 $(x,\eta)\mapsto \gamma_{x}\cdot \eta$. The trivialisation changes are then
 given by
 \begin{equation*}
  \Phi_{F_{h}}\circ  \Phi_{F_{g}}^{-1}\from  (gU\cap hU)\times LG,\quad (x,\eta)\mapsto \gamma_{h,x}\cdot \gamma_{g,x}^{-1}\cdot \eta,
 \end{equation*}
 which are clearly smooth and commute with the right multiplication action of
 $LG$ on itself. Thus there exists a unique smooth structure on $QG$ turning
 each $\Phi_{F_{g}}$ into a diffeomorphism. This smooth structure then turns
 $\Theta\from QG\to G$ into a smooth principal $LG$-bundle.
\end{proof}
 
 Note the smooth structure on $QG$ is uniquely determined by the underlying topological
 principal bundle
 \cite[CorollaryII.13]{MullerWockel07Equivalences-of-Smooth-and-Continuous-Principal-Bundles-with-Infinite-Dimensional-Structure-Group}. It follows that the charts of $QG$ are modelled on the vector space underlying the Lie algebra $L\fg\times\fg$.

 From the fact that $\Theta\from QG\to G$ is a principal bundle it follows that
 the action map $QG\times LG\to QG\times_{G}QG$ a diffeomorphism. Moreover, in
 this particular situation the smooth inverse of the action map is given by
 \begin{equation*}
  QG\times _{G} QG\to QG\times LG, \quad (\gamma,\eta)\mapsto (\gamma,\gamma^{-1}\cdot\eta),  
 \end{equation*}
 where $\gamma^{-1}\cdot\eta$ denotes the point-wise product of smooth paths
 in $G$ (note that $\gamma^{-1}\cdot\eta$ is indeed periodic since
 $\Theta(\gamma)=\Theta(\eta)$). Since $G$ has a natural base-point,
 $\Theta$ has a natural fibre which is, of course, the space of loops $LG$ in
 $QG$. From this we obtain a natural principal $LG$-bundle
 \begin{equation}\label{eqn:quasi-periodic-path-loop-fibration}
  LG\hookrightarrow QG \xrightarrow{\Theta}G
 \end{equation}
 over $G$. We can view $QG$ also as a bundle for the smooth based loop group
 \begin{equation*}
  \Omega G:=\{\gamma\in LG\mid \gamma(0)=e \}=\{\gamma \in QG\mid \gamma(0)=\gamma(1)=e\}.
 \end{equation*} 
 Indeed, the evaluation map $\ev_{0,1}\from QG\to G\times G$,
 $\gamma\mapsto (\gamma(0),\gamma(1))$ has $\Omega G$ as canonical fibre over $(e,e)\in G\times G$.
 
 Let us consider the quotient bundle
 \begin{equation*}
  QG/G=QG\times_{LG}LG/G\to G,
 \end{equation*}
 which is obtained by taking the bundle associated to the smooth action by left
 translations on the homogeneous space $LG/G$. Note that
 \begin{equation*}
   LG/G\to \Omega G,\quad [\gamma]\mapsto \gamma\cdot \gamma(0)^{-1}
 \end{equation*}
 is an equivariant diffeomorphism if we endow $\Omega G$ with the $LG$ action
 \begin{equation*}
  LG\times \Omega G\to \Omega G,\quad \gamma.\eta:=\gamma\cdot \eta\cdot \gamma(0)^{-1}
 \end{equation*}
 from the left.
 
 This way we obtain a locally trivial $LG/G$ fibre bundle over $G$. 
If
 $P_{c}G:=\{\gamma\in C([0,1],G)\mid \gamma(0)=e\}$ denotes the continuous
 pointed paths in $G$, then there is a continuous map
 \begin{equation*}
  \Xi\from  QG/G\to P_{c}G,\quad [\gamma]\mapsto \left.(\gamma\cdot \gamma(0)^{-1})\right|_{[0,1]}.
 \end{equation*}
 If $\Omega_{c}G:=\{\gamma\in C([0,1],G)\mid \gamma(0)=\gamma(1)=e\}$ denotes
 the continuous pointed loop group, and we identify $\Omega G$ with $LG/G$
 via the $LG$-equivariant diffeomorphism $\Omega G\to LG/G$,
 $\gamma\mapsto [\gamma]$, then $\Xi$ restricts on $\Omega G$ to the obvious
 map $\Omega G\to \Omega_{c}G$, $\gamma\mapsto \left.\gamma\right|_{[0,1]}$.
 Thus we obtain a commuting diagram
 \begin{equation}\label{eqn4}
  \vcenter{  \xymatrix{
  \Omega G\cong LG/G	 \xyhookrightarrow{1.8em}  \ar[d]^{\left.{\Xi}\right|_{\Omega G}} & QG/G\ar[d]^{\Xi} \ar[r]^{\Theta} & G\ar@{=}[d]& \\
  \Omega_{c}G \xyhookrightarrow{4em} & P_{c}G\ar[r]^{\ev_{1}} & G.
  }}
 \end{equation}
 Since $\left.\Xi\right|_{\Omega G}$ is a weak homotopy equivalence, so is
 $\Xi$ and since $P_{c}G$ is contractible it follows that $QG/G$ is also
 contractible \cite{Palais66Homotopy-theory-of-infinite-dimensional-manifolds}
 (that $QG/G$ is metrisable follows from the metrisability of $\Omega G$ and
 $G$ \cite{EtterGriffin54On-the-metrizability-of-the-bundle-space,Wockel13Infinite-dimensional-and-higher-structures-in-differential-geometry}).

Another way to see that $QG/G$ is contractible is to observe that the 
left logarithmic derivative \eqref{eq:leftlogder}  
 factors through a diffeomorphism $QG/G\xrightarrow{\simeq}
\Omega^{1}(S^{1},\fg)$ \cite[Corollary
2.3]{NeebWagemann08Lie-group-structures-on-groups-of-smooth-and-holomorphic-maps-on-non-compact-manifolds}. The space $\Omega^{1}(S^{1},\fg)$ is the space of connections on the trivial $G$-bundle over $\bS^{1}$, which has also been used as a model for the path fibration, for instance in \cite{CarMic}.
 
 From the
 long exact homotopy sequences induced by \eqref{eqn4} and from
 $\pi_{n}(LG)\cong \pi_{n+1}(G)\oplus \pi_{n}(G)$ one can also read off the
 short exact sequences
 \begin{equation*}
  0 \to \pi_{n+1}(G)\to \pi_{n+1}(G)\oplus \pi_{n}(G) \to \pi_{n}(QG)\to 0
 \end{equation*}
 and thus $\pi_{n}(QG)\cong \pi_{n}(G)$.

\begin{tabsection}
 Although the bundle $\Theta\from QG\to G$ arises quite naturally, it is
 unfortunately not very useful for Lie theoretic arguments since $QG$ does not
 admit a Lie group structure that turns $QG$ into an extension of $G$ by $LG$
 (whereas $QG$ \emph{is} a principal $LG$-bundle over $G$). This is what we will show
 now.
\end{tabsection}

\begin{proposition}\label{prop:extensions_of_current_algebras}
 Suppose $M$ is a compact closed manifold and $\fk,\fg$ are finite-dimensional Lie algebras. 
 If $\fg$ is a simple real Lie algebra such that $\fg_{\bC}$ is a simple complex Lie algebra, then each Lie algebra  extension $C^{\infty}(M,\fg)\to \wh{\fk}\to\fk$ is isomorphic to the semi-direct product
 \begin{equation*}
  C^{\infty}(M,\fg)\to C^{\infty}(M,\fg)\rtimes \fk\to\fk,
 \end{equation*}
 where $\fk$ acts on $C^{\infty}(M,\fg)$ via an action of $\fk$ on $M$; that is, a Lie algebra homomorphism $\fk\to\cV(M)$ composed with the natural action $\cV(M)\to\der (C^{\infty}(M,\fg))$).
\end{proposition}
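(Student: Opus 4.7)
My plan is to classify such extensions via non-abelian extension theory, reducing everything to the computation of $\der(\fa)$ where $\fa := C^{\infty}(M,\fg)$. The first observation is that $Z(\fa) = 0$: because $\fg$ is simple, $Z(\fg) = 0$, and any central element of $\fa$ must take values in $Z(\fg)$ by testing against constant sections. Once $Z(\fa) = 0$ is in hand, the classical Chevalley--Eilenberg theory of non-abelian extensions tells us that extensions of $\fk$ by $\fa$ are classified by their outer actions $\omega\from\fk\to\Out(\fa)$, and that the extension corresponding to a given $\omega$ is uniquely isomorphic to the pullback of the canonical sequence $0\to\fa\to\der(\fa)\to\Out(\fa)\to 0$ along $\omega$.

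The technical heart of the argument is then the identification
\begin{equation*}
\der(\fa)\;\cong\;\fa\rtimes\cV(M),
\end{equation*}
where $\cV(M)$ acts on $\fa$ in the standard way by differentiation of $\fg$-valued functions, and $\fa$ is embedded via the adjoint action. For $M = S^{1}$ this is a well-known result in the theory of affine Kac--Moody algebras. For general compact $M$ one proceeds as follows. First, one shows that every derivation $D$ is local, so that $\supp(Df)\subseteq\supp(f)$, using the identity $\fg = [\fg,\fg]$ (which holds because $\fg$ is simple) together with bump-function arguments to restrict to arbitrarily small neighbourhoods. At each point $x\in M$ the induced map on the fibre $\fa/\ker(\ev_x)\cong\fg$ is then a derivation of $\fg$, which is inner because the hypothesis that $\fg_{\bC}$ is simple implies (after complexification) that $\Out(\fg) = 0$; moreover, the representative is unique since $Z(\fg) = 0$. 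Gluing these pointwise representatives yields an element of $\fa$, and the remaining derivation, which vanishes on all fibrewise evaluations of constant elements, is necessarily of the form $X.f$ for some $X\in\cV(M)$. This step is the main technical obstacle I expect; the remainder of the proof is formal.

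Combining the above, we obtain a canonical split short exact sequence
\begin{equation*}
0\to\fa\to\der(\fa)\to\cV(M)\to 0,
\end{equation*}
and in particular $\Out(\fa)\cong\cV(M)$. Hence an outer action $\omega\from\fk\to\Out(\fa)$ is exactly a Lie algebra homomorphism $\fk\to\cV(M)$, i.e.\ an action of $\fk$ on $M$ in the sense of the statement. The corresponding extension, being the pullback of $\der(\fa) = \fa\rtimes\cV(M)$ along $\omega$, is precisely the semi-direct product $\fa\rtimes_{\omega}\fk$, which proves the proposition.
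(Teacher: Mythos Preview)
Your overall architecture matches the paper's: use $Z(\fa)=0$ (with $\fa=C^{\infty}(M,\fg)$) to reduce the classification of extensions to outer actions, identify $\Out(\fa)\cong\cV(M)$, and conclude that every extension is the semi-direct product attached to a Lie algebra map $\fk\to\cV(M)$. The paper also invokes Neeb's uniqueness theorem for non-abelian extensions with trivial centre at exactly the point where you appeal to the pullback description.

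The genuine difference lies in how the identification $\Out(\fa)\cong\cV(M)$ is obtained. You aim to compute $\der(\fa)$ directly: locality via $\fg=[\fg,\fg]$ and bump functions, then extract an inner part by evaluating on constant sections, leaving a derivation that kills constants and must come from a vector field. The paper instead routes through the \emph{centroid}: one has a Lie algebra map $\cent\from\der(\fa)\to\der(\cent(\fa))$ with the inner derivations in its kernel, and G\"undo\u{g}an's results give that the induced map $\out(\fa)\to\der(\cent(\fa))$ is an isomorphism and that $\cent(C^{\infty}(M,\fg))\cong C^{\infty}(M,\cent(\fg))\cong C^{\infty}(M,\R)$ (this is precisely where the hypothesis that $\fg_{\bC}$ be simple is used, to ensure $\cent(\fg)=\R$). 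Since $\der(C^{\infty}(M,\R))=\cV(M)$, one gets $\Out(\fa)\cong\cV(M)$ without ever writing down $\der(\fa)$ explicitly.

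What each buys: your route is more elementary and self-contained, but the step you flag as the ``main technical obstacle'' (a derivation annihilating constants is given by a single vector field) is genuinely nontrivial and is where the simplicity of $\fg$ does real work---a careless argument would allow different vector fields to act in different $\fg$-directions. The centroid approach packages exactly this difficulty into the identification $\cent(\fa)\cong C^{\infty}(M,\R)$ and the isomorphism $\out(\fa)\cong\der(\cent(\fa))$, both of which are citable. A minor further point: the paper works throughout with topological Lie algebras and notes that continuity of the $\fk$-action follows from $\dim\fk<\infty$; you do not mention continuity, but for the same reason it is harmless here.
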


\begin{proof}
 We will use the extension theory of topological Lie algebras
 \cite{Neeb06Non-abelian-extensions-of-topological-Lie-algebras} to show that
 each extension gives rise to an action $\fk\to\cV(M)$ and that this extension
 then is isomorphic to $C^{\infty}(M,\fg)\rtimes\fk$.
 
 If we fix a linear section $s\from \fk\to \wh{\fk}$, then
 $x\mapsto [s(x),\argument]$ gives rise to a linear map
 $\fk\to \der(C^{\infty}(M,\fg))$. Since
 $$\fz(C^{\infty}(M,\fg))=C^{\infty}(M,\fz(\fg))=\{0\}$$
 we can identify $C^{\infty}(M,\fg)$ with a subalgebra of
 $\der(C^{\infty}(M,\fg))$ and the composition of
 $\fk\to\der{C^{\infty}(M,\fg)}$ with the quotient map
 $$
 \der(C^{\infty}(M,\fg))\to \out(C^{\infty}(M,\fg)):=\der(C^{\infty}(M,\fg))/C^{\infty}(M,\fg)
 $$
 gives rise to a Lie algebra homomorphism
 $r\from \fk\to\out(C^{\infty}(M,\fg))$.
 
 Recall that the centroid of a Lie algebra $\fh$ is the commutant of the
 adjoint representation
 \begin{equation*}
  \cent(\fh):=\{f\in \End(\fh)\mid \ad(x)\circ f=f \circ\ad(x)\text{ for all }x\in\fh \}.
 \end{equation*}
 Clearly, $\cent(\fh)$ is an associative $\K$-subalgebra of $\End(\fh)$ if
 $\fh$ is a Lie algebra over $\K$. Moreover, each derivation
 $D\in\der(\fh)$ induces a derivation 
 $$
 \cent(D)\from \cent(\fh)\to \cent(\fh)
 $$
 of $\K$-algebras via the assignment $f\mapsto f \circ D-D \circ f$. A
 straightforward calculation shows that
 $\cent\from \der(\fh)\to \der(\cent(\fh))$ is a Lie algebra homomorphism that
 has the inner derivations in the kernel.
 
 In the case $\fh=C^{\infty}(M,\fg)$ this induces a morphism of Lie algebras
 $$
 \out(C^{\infty}(M,\fg))\to \der(\cent(C^{\infty}(M,\fg))).
 $$ By \cite[Theorem
 2.76]{Gundogan09Lie-algebras-of-smooth-sections} this map is an isomorphism.
 Moreover, by \cite[Theorem~2.86]{Gundogan09Lie-algebras-of-smooth-sections} we
 have that the canonical map
 $C^{\infty}(M,\cent{(\fg)})\to\cent(C^{\infty}(M,\fg))$ is an isomorphism.
 Under the assumptions made we have $\cent(\fg)\cong\R$ (see \cite[Remark
 2.98]{Gundogan09Lie-algebras-of-smooth-sections} or \cite[Remark
 B.4]{NeebWockel07Central-extensions-of-groups-of-sections}) and thus
 $\cent(C^{\infty}(M,\fg))\cong C^{\infty}(M,\R)$. 
 In total, $r$ may thus be
 interpreted as a morphism of Lie algebras
 \begin{equation*}
  r\from \fk\to \out(C^{\infty}(M,\fg))\cong \cV(M).
 \end{equation*}
 Since $\fk$ is finite dimensional the action of   $\fk$ on $C^{\infty}(M,\fg)$ by linear 
 continuous maps induced by $r$ is continuous. 
 Thus we
 have the semi-direct product $C^{\infty}(M,\fg)\rtimes \fk$ giving rise to the
 extension
 \begin{equation}\label{eqn2}
  C^{\infty}(M,\fg)\to C^{\infty}(M,\fg)\rtimes\fk\to\fk.
 \end{equation}
 This extension obviously has $r$ as associated outer action. Since
 $\fz(C^{\infty}(M,\fg))=\{0\}$, the extension theory of topological Lie
 algebras tells us that for each outer action there exists up to equivalence at
 most one extension implementing this outer action \cite[Theorem
 IV.4]{Neeb06Non-abelian-extensions-of-topological-Lie-algebras}. This implies
 that any extension is equivalent to \eqref{eqn2}.
\end{proof}

\begin{proposition}\label{prop:QG_is_no_Lie_group}
 Suppose that $G,K$ are 1-connected finite-dimensional Lie groups and that the
 Lie algebra $\fk:=\Lf(K)$ is semi-simple without $\mf{sl}_{2}(\R)$-summands and
 that $\fg:=\Lf(G)$ is real simple such that $\fg_{\bC}$ is complex simple.
 Then each extension $LG\to \wh{K}\to K$ of Lie groups is equivalent to the
 trivial extension $LG\to LG\times K\to K$.
\end{proposition}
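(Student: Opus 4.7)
My plan is to pass to the Lie algebra and invoke Proposition \ref{prop:extensions_of_current_algebras}. First I would differentiate the Lie group extension, obtaining a Lie algebra extension $L\fg \to \widehat{\fk} \to \fk$ with $\widehat{\fk} = \Lie(\widehat{K})$; applying Proposition \ref{prop:extensions_of_current_algebras} with $M = S^{1}$ then identifies $\widehat{\fk}$ with a semidirect product $L\fg \rtimes_{r} \fk$ for some Lie algebra homomorphism $r \from \fk \to \cV(S^{1})$.

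The heart of the argument is to show $r = 0$. I would decompose $\fk = \bigoplus_{i} \fk_{i}$ into simple summands; simplicity forces the image of each $\fk_{i}$ under $r$ to be either zero or isomorphic to $\fk_{i}$. Next I would appeal to the classification of finite-dimensional simple Lie algebras of vector fields on a one-dimensional manifold, going back to Lie and underpinned by Hofmann's classification \cite{Hofmann65Lie-algebras-with-subalgebras-of-co-dimension-one} of Lie algebras with codimension-one subalgebras, referenced in the acknowledgements: the only simple finite-dimensional subalgebra of $\cV(S^{1})$ is a copy of $\mathfrak{sl}_{2}(\R)$, acting via the M\"obius action of $PSL_{2}(\R)$ on $S^{1} \cong \R P^{1}$. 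The hypothesis that no $\fk_{i}$ is $\mathfrak{sl}_{2}(\R)$ therefore forces $r$ to vanish on each simple summand, so $r = 0$ and $\widehat{\fk} \cong L\fg \times \fk$ as a direct product of Lie algebras.

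With the outer action killed, I would integrate the splitting to the group level. Since $K$ is $1$-connected, the inclusion $\fk \hookrightarrow \widehat{\fk}$ integrates to a Lie group homomorphism $\sigma \from K \to \widehat{K}$; because the composite $K \to \widehat{K} \to K$ has differential the identity and $K$ is connected, $\sigma$ is a section of the projection. Since $[L\fg, \fk] = 0$ in $\widehat{\fk}$ and both $LG$ and $K$ are connected, the images of $LG$ and $\sigma(K)$ commute in $\widehat{K}$, so $\Psi \from LG \times K \to \widehat{K}$, $(g,k) \mapsto g \cdot \sigma(k)$, is a Lie group homomorphism. Bijectivity follows directly from $\sigma$ being a section with $\ker(\widehat{K} \to K) = LG$, and $\Psi$ is a local diffeomorphism because its derivative is the identity on $L\fg \oplus \fk$; hence $\Psi$ is an isomorphism of Lie groups trivialising the extension.

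The principal obstacle will be the second step: excluding exotic embeddings of semi-simple Lie algebras into the infinite-dimensional $\cV(S^{1})$. The hypothesis barring $\mathfrak{sl}_{2}(\R)$ summands is exactly the minimal rigidity condition needed, since $\mathfrak{sl}_{2}(\R)$ itself \emph{does} embed into $\cV(S^{1})$; once this is in hand, the rest is integration and bookkeeping using Proposition \ref{prop:extensions_of_current_algebras} and the $1$-connectedness of $K$.
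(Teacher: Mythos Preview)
Your proposal is correct and follows essentially the same route as the paper: differentiate, apply Proposition~\ref{prop:extensions_of_current_algebras} to reduce to an action $r\from \fk\to\cV(S^{1})$, kill $r$ via Hofmann's codimension-one classification, and integrate the splitting using $1$-connectedness of $K$. The one technical point you gloss over is that integrating $\fk\hookrightarrow\wh{\fk}$ to a homomorphism $K\to\wh{K}$ requires $\wh{K}$ to be a \emph{regular} Lie group in Milnor's sense; the paper invokes \cite[Theorem~V.1.8]{Neeb06Towards-a-Lie-theory-of-locally-convex-groups} to deduce this from the regularity of $LG$ and $K$, and you should do the same.
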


\begin{proof}
 Let $L\fg\to\wh{\fk}\to\fk$ be the associated extension of Lie algebras. By
 Proposition~\ref{prop:extensions_of_current_algebras} this extension is
 uniquely determined by an action $r\from\fk\to\cV(S^{1})$. Clearly, $\im(r)$
 is a finite-dimensinonal subalgebra of $\cV(S^{1})$.

 Let $\bigoplus_{i=i}^{n}\wt{\fk}_{i}$ be a decomposition of $\fk$ into simple
 ideals and set $\fk_{i}:=r(\wt{\fk}_{i})\leq \cV(S^{1})$. Suppose
 $0\neq \fk_{i}\leq \cV(S^{1})$. Then there exists some $x_{i}\in S^{1}$ which
 is not fixed by $\fk_{i}$ (i.e. some vector field contained in $\fk_{i}$ does not vanish
 in $x_{i}$). Then the orbit of the integrated action of the 1-connected Lie
 group $K_{i}$ of $\fk_{i}$ has a one-dimensional orbit and thus the stabiliser
 $K_{x_{i}}\leq K_{i}$ has co-dimension one. Thus $\fk_{i}$ has to have a
 co-dimension one subalgebra $\fk_{x_{i}}:=\Lf(K_{x_{i}})$. Among the simple
 finite-dimensional Lie algebras this is only the case for $\mf{sl}_{2}(\R)$
 \cite[Lemma~2]{Hofmann65Lie-algebras-with-subalgebras-of-co-dimension-one}. It
 follows from this and the assumptions on $\fk$ that $r\from \fk\to \cV(S^{1})$
 has to be trivial. Thus Proposition~\ref{prop:extensions_of_current_algebras}
 implies that $L\fg\to\wh{\fk}\to\fk$ is equivalent to
 $L\fg\to L\fg\times\fk\to\fk$.
 
 In order to now transfer the assertions to the Lie group extension
 $$
 LG\to\wh{K}\to K,
 $$ observe that $\wh{K}$ is a regular Lie group in the sense
 of Milnor since $LG$ and $K$ are \cite[Theorem
 V.1.8]{Neeb06Towards-a-Lie-theory-of-locally-convex-groups}. Thus the Lie
 algebra homomorphisms $\fg\to \wh{\fg}$ and $\wh{\fg}\to L\fg$ that are
 induced by the equivalence $\wh{\fg}\cong L\fg\times \fg$ integrate to Lie
 group homomorphisms since $K$ and $LG$ are 1-connected (and thus is $\wh{K}$).
 This shows that $\wh{K}\cong LG\rtimes K$. Moreover, the triviality of the
 action of $\fk$ on $L\fg$ implies that the action of $K$ on $LG$ has to be
 trivial since $K$ is connected. This finishes the proof.
\end{proof}

\begin{corollary}
 If $G$ is compact and 1-connected, then $QG$ does not admit a Lie group
 structure that coincides on $LG$ with the usual one (cf.\
 \cite{Glockner02Lie-group-structures-on-quotient-groups-and-universal-complexifications-for-infinite-dimensional-Lie-groups,PressleySegal86Loop-groups})
 and admits a morphism $QG\to G$ that turns $LG\hookrightarrow QG\to G$ into an
 extension of Lie groups, whose underlying principal $LG$-bundle is equivalent
 to $\Theta\from QG\to G$. The same is true for the evaluation map
 $\ev_{0,1}\from QG\to G\times G$.
\end{corollary}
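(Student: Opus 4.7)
The plan is to argue by contradiction: use Proposition~\ref{prop:QG_is_no_Lie_group} to force any putative Lie group extension of the stated form to be trivial as an extension, and then obtain a contradiction from the non-triviality of the underlying principal bundle, which can be detected on second homotopy groups.

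For the statement about $\Theta$, suppose $QG$ admits a Lie group structure satisfying the hypothesis. Since $G$ is compact, simple and 1-connected, $\fg$ is real simple with $\fg_{\bC}$ complex simple, and $\fg \not\cong \mf{sl}_{2}(\R)$ (the latter being non-compact). Applying Proposition~\ref{prop:QG_is_no_Lie_group} with $K = G$, the extension $LG \to QG \to G$ must be equivalent to the trivial extension $LG \to LG \times G \to G$; in particular, the underlying principal $LG$-bundle would be trivialisable, and $QG$ would be diffeomorphic to $LG\times G$. But from the short exact sequences displayed just above Proposition~\ref{prop:extensions_of_current_algebras} one has $\pi_{2}(QG) \cong \pi_{2}(G) = 0$, whereas
\begin{equation*}
\pi_{2}(LG \times G) \cong \pi_{2}(LG) \oplus \pi_{2}(G) \cong \pi_{3}(G) \oplus \pi_{2}(G) \oplus \pi_{2}(G) \cong \Z,
\end{equation*}
using $\pi_{n}(LG) \cong \pi_{n+1}(G) \oplus \pi_{n}(G)$ (from the split fibration $\Omega G \to LG \to G$) and $\pi_{3}(G) \cong \Z$ for $G$ compact, simple and 1-connected. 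This contradiction establishes the first assertion.

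For the evaluation map $\ev_{0,1}\from QG \to G \times G$ the same scheme is to be applied: the base $G \times G$ is 1-connected with Lie algebra $\fg \oplus \fg$, which is semi-simple without $\mf{sl}_{2}(\R)$-summand. One establishes the direct analogue of Proposition~\ref{prop:QG_is_no_Lie_group} for extensions by the based loop group $\Omega G$ (in place of $LG$), forcing the hypothetical extension $\Omega G \to QG \to G\times G$ to be the trivial direct product $\Omega G \times (G \times G)$. The contradiction then comes from $\pi_{2}(\Omega G \times G^{2}) \cong \pi_{3}(G) \cong \Z$ versus $\pi_{2}(QG)=0$.

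The main obstacle lies in this second case: one needs a version of Proposition~\ref{prop:extensions_of_current_algebras} applicable to extensions by $\Omega \fg$ rather than $L\fg=C^{\infty}(S^{1},\fg)$. The key input is a computation of $\out(\Omega \fg)$ as a Lie algebra of vector fields on an appropriate 1-dimensional object (namely $S^{1}$ with a distinguished point, or equivalently the open interval), combined with Hofmann's fact \cite{Hofmann65Lie-algebras-with-subalgebras-of-co-dimension-one} that $\mf{sl}_{2}(\R)$ is the only simple finite-dimensional real Lie algebra admitting a co-dimension one subalgebra. This forces the induced outer-action homomorphism $\fg \oplus \fg \to \out(\Omega \fg)$ to be trivial, so that the extension splits; the remainder of the homotopical comparison is then identical.
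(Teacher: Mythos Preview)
Your argument for $\Theta$ is correct and follows exactly the paper's strategy: invoke Proposition~\ref{prop:QG_is_no_Lie_group} to force any such extension to be the trivial product, then contradict non-triviality of the bundle. The paper detects non-triviality by passing to the associated bundle $QG/G\to G$, which is the universal $\Omega G$-bundle (non-trivial since $\Omega G$ is not contractible); you instead compare $\pi_{2}(QG)=0$ with $\pi_{2}(LG\times G)\cong\Z$. Both work.

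For $\ev_{0,1}$ you are in fact more scrupulous than the paper. The paper's proof simply asserts ``it suffices to show that both bundles are non-trivial'' and then checks non-triviality via $\pi_{n}(QG)\cong\pi_{n}(G)$ (your computation), tacitly presuming an analogue of Proposition~\ref{prop:QG_is_no_Lie_group} for extensions of $G\times G$ by $\Omega G$. You rightly flag this as the missing ingredient. Your sketch is sound in outline, but note that the structural results feeding into Proposition~\ref{prop:extensions_of_current_algebras} (the identifications $\cent(C^{\infty}(M,\fg))\cong C^{\infty}(M,\R)$ and $\out(C^{\infty}(M,\fg))\cong\cV(M)$ from \cite{Gundogan09Lie-algebras-of-smooth-sections}) are stated for $C^{\infty}(M,\fg)$ with $M$ compact closed; the algebra $\Omega\fg$ is cut out by a point condition and is not literally of this form, so identifying $\out(\Omega\fg)$ with vector fields on a pointed circle or open interval needs a separate argument rather than a citation. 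Once that is in hand, Hofmann's codimension-one result applies exactly as you say and the homotopical contradiction goes through.
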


\begin{proof}
 It suffices to show that both bundles $\Theta\from  QG\to G$ and
 $\ev_{0,1}\from QG\to G\times G$ are non-trivial. To check this it suffices by
 \cite{MullerWockel07Equivalences-of-Smooth-and-Continuous-Principal-Bundles-with-Infinite-Dimensional-Structure-Group}
 to observe that the underlying topological bundles are non-trivial. If
 $\Theta\from QG\to G$ was trivial, then so would be $QG/G\to G$. But the
 latter is the universal $\Omega G$-bundle, which is not trivial since
 $\Omega G$ is not contractible. To see that $\ev_{0,1}\from QG\to G\times G$
 cannot be trivial one can for instance observe that this would contradict
 $\pi_{n}(QG)\cong \pi_{n}(G)$.
\end{proof}

\begin{remark}\label{rem:circle_group_not_acting_on_pointed_models}
 Note that all the results of this section are very sensitive to the fact that we are dealing with smooth loops and paths (in contrast to continuous
 loops/paths or loops that are smooth except at certain points). 
 This is (at least morally) due to the fact that $\Diff(S^{1})$ (or $S^1$) only acts on smooth loops, and not on loops that have any kind of preferred points.
\end{remark}

\section{Strict Lie 2-group models for the string 2-group from \texorpdfstring{$\wh{LG}$}{LG-hat} do not exist} %
\label{sec:lie_2_group_models_for_lg_}

\begin{tabsection}
 In this short section we will apply the results from the previous section to
 strict models for the string 2-group. We will show that there does not exist
 such a model with source fibre the universal central extension $\wh{LG}$.
 
 Throughout this section, $G$ denotes a simple, compact and 1-connected Lie
 group.
\end{tabsection}

\begin{remark}\label{rem:central-extension-of-the-full-loop-group}
 In this remark we collect the background on the universal central extension
 $\wh{LG}$ of $LG$ that we will need later on in the text. Recall the Kac-Moody
 cocycle
 \begin{equation*}
  \kappa\from  L\fg\times L\fg\to \R,\quad (\mu,\nu)\mapsto \int_{0}^{1}\langle \mu(t),\nu'(t)\rangle dt,
 \end{equation*}
 where $\sprod$ is the Killing form on $\fg$ normalised as discussed at the beginning. This is a continuous cocycle on
 $L\fg$ and a generator of $H^{2}_{c}(L\fg,\R)\cong \R$. From this one
 constructs a central extension $U(1) \to \wh{LG}\to LG$, see for instance
 \cite{Mickelsson85Two-cocycle-of-a-Kac-Moody-group,PressleySegal86Loop-groups,MurrayStevenson01Yet-another-construction-of-the-central-extension-of-the-loop-group,MaierNeeb03Central-extensions-of-current-groups,BaezCransStevensonSchreiber07From-loop-groups-to-2-groups}.
 It is not important how $\wh{LG}$ is constructed exactly, the only thing that
 we will use is that the derived central extension
 $\Lf(\wh{LG})\to \Lf(LG)=L\fg$ is equivalent to the central extension
 $\R \oplus_{\kappa} L\fg\to L\fg$ of Lie algebras.

 What we obtain from $\wh{LG}$ is in particular a lifting bundle gerbe for the
 principal $LG$-bundle $q\from QG\to G$ and the central extension
 $U(1)\to\wh{LG}\to LG$. This gives rise to an action groupoid
 $(\wh{LG}\times QG \toto QG )$, where $\wh{LG}$ acts on $QG$ via the morphism
 $\wh{LG}\to LG$ and the action of $LG$ on $QG$.
\end{remark}

\begin{tabsection}
 For later reference we fix the following definition.
\end{tabsection}

\begin{definition}\label{def:SQG}
 The action groupoid $ (\wh{LG}\times QG \toto QG)$ from the previous remark
 will be denoted by $S(QG)$.
\end{definition}
 We will assume some familiarity with the notion of crossed modules of Lie
 groups. Background on this can be found in
 \cite{BaezLauda04Higher-dimensional-algebra.-V.-2-groups,BaezCransStevensonSchreiber07From-loop-groups-to-2-groups,Neeb07Non-abelian-extensions-of-infinite-dimensional-Lie-groups,Porst08Strict-2-Groups-are-Crossed-Modules,NikolausSachseWockel13A-smooth-model-for-the-string-group,MurrayRobertsStevenson12On-the-existence-of-bibundles}. 
 We will follow the convention to use the conjugation action from the
 \emph{right}. In particular, the notion of a \emph{smooth} crossed module will
 be the one from \cite[Definition
 3.1]{Neeb07Non-abelian-extensions-of-infinite-dimensional-Lie-groups} (with
 the small exception that we consider the right conjugation action). If
 $\cG=(G_{1}\toto G_{0})$ is a strict Lie 2-group, then clearly the
 multiplication functors turn $G_{1}$ and $G_{0}$ into Lie groups. From this we
 obtain a crossed module $\tau\from K\to H$ as follows. We set $K$ to be the
 source fibre $K:=s^{-1}(1)$, which is a closed Lie subgroup since $s$ is a
 submersion. Furthermore, we set $H:=G_{0}$ and $\tau$ to be the restriction of
 the target map $t$ to $K$. Moreover, $H$ acts smoothly on $K$ by conjugation
 with identity morphisms. The requirements on $\cG$ to be a Lie 2-group then
 imply that $\tau\from K\to H$ is a crossed module. For more detail on this see
 in particular
 \cite{BaezLauda04Higher-dimensional-algebra.-V.-2-groups,Porst08Strict-2-Groups-are-Crossed-Modules}.
 Under some moderate further requirements we can further deduce a lifting
 bundle gerbe as follows (see
 \cite{Murray96Bundle-gerbes,Murray10An-introduction-to-bundle-gerbes,SchweigertWaldorf11Gerbes-and-Lie-groups}
 for background on bundle gerbes). In the sequel, $\ul{\pi}_{1}(\cG)\leq G_{1}$ denotes the subgroup of automorphisms of the identity and $\ul{\pi}_{0}(\cG)$ the quotient of isomorphism
 classes in $G_{0}$.  We call $\cG$ \emph{smoothly separable}
 (cf.\ \cite[Definition
 3.6]{NikolausWaldorf11Lifting-Problems-and-Transgression-for-Non-Abelian-Gerbes})
 if $G_{1}/\ul{\pi}_{1}(\cG)$ carries a manifold structure turning $G_{1}\to G_{1}/\ul{\pi}_{1}(\cG)$ into a submersion and the group  $\ul{\pi}_{0}(\cG)$ of isomorphism
 classes carries a Lie group structure turning $G_{0}\to \ul{\pi}_{0}(\cG)$
 into a submersion. In this case the submersion $G_{0}\to \ul{\pi}_{0}(\cG)$
 has as kernel a closed Lie subgroup which is isomorphic to
 $\im(\tau)$ and $G_{0}\to \ul{\pi}_{0}(\cG)$ is a smooth locally
 trivial principal $\im(\tau)$-bundle. Moreover, $\ul{\pi}_{1}(\cG)\to K \to \im(\tau)$ is
 a central extension. This data then gives a \emph{smooth} crossed module of
 Lie groups and, additionally, a $\ul{\pi}_{1}(\cG)$-lifting bundle gerbe over $G$ associated to the principal $\im(\tau)$-bundle and the $\ul{\pi}_{1}(\cG)$-central extension.

\begin{definition}(see
 \cite[Definition 4.10 and Lemma
 4.11]{NikolausSachseWockel13A-smooth-model-for-the-string-group})\label{def:model} Assume that
 $G$ is a simple, compact and 1-connected Lie group. A strict Lie 2-group model
 for the string group (of $G$) is a smoothly separable strict Lie 2-group $\cG$,
 together with isomorphisms $\ul{\pi}_{1}(\cG)\cong U(1)$ and
 $\ul{\pi}_{0}(\cG)\cong G$ such that the underlying bundle gerbe has
 as Dixmier--Douady class a generator of
 $H^{3}(G,\Z)\cong \check{H}^{2}(G,\underline{U(1)})$ with respect to these isomorphisms.
\end{definition}

\begin{tabsection}
 The results of the previous section now read in the context of string group
 models as follows. Recall the concept of topological group cohomology
 \cite{Segal70Cohomology-of-topological-groups,WagemannWockel15A-cocycle-model-for-topological-and-Lie-group-cohomology}
 of which we use the locally smooth model $H^{n}_{\op{loc}}(G,U(1))$ (cf.\
 \cite[Appendix B]{Neeb04Abelian-extensions-of-infinite-dimensional-Lie-groups}
 or \cite[Sections 1 and
 5]{WagemannWockel15A-cocycle-model-for-topological-and-Lie-group-cohomology}).
\end{tabsection}

\begin{proposition}
 Let $G$ be a 1-connected Lie group such that its Lie algebra $\fg$ is real
 simple and that $\fg_{\bC}$ is complex simple. Then each smooth crossed module
 \begin{equation*}
  \tau\from \wh{LG}\to H
 \end{equation*}
 with $\ker(\tau)\cong U(1)$ and $\coker(\tau)\cong G$ has a trivial
 characteristic class in $H^{3}_{\loc}(G,U(1))$. 
  In particular, the underlying
 $U(1)$-lifting bundle gerbe is trivial.
\end{proposition}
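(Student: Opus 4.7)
The plan is to apply Proposition~\ref{prop:QG_is_no_Lie_group} to show that the Lie group extension underlying the given crossed module is trivial, and then to read off the vanishing of the characteristic class from a canonical splitting.

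First, since $\ker(\tau)\cong U(1)$ is central in $\wh{LG}$, the map $\tau$ factors through the quotient $\wh{LG}\to LG$ followed by an injection $LG\hookrightarrow H$ identifying $\im(\tau)$ with $LG$. Combined with $\coker(\tau)\cong G$, this yields an extension of Lie groups $LG\to H\to G$. Applying Proposition~\ref{prop:QG_is_no_Lie_group} with $K=G$, this extension must be equivalent to the trivial direct product, so I may identify $H\cong LG\nbtimes G$ as Lie groups. It then remains to trace the crossed module action through this identification. The $LG$-factor of $H$ acts on $\wh{LG}$ by lifted conjugation (forced by the Peiffer identity), while $\tau$-equivariance together with the fact that $G$ and $LG$ commute in $LG\nbtimes G$ forces $G$ to act on $\wh{LG}$ covering the trivial action on $LG=\im(\tau)$. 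Hence each $g\in G$ acts on $\wh{LG}$ by $\eta\mapsto \chi_{g}(\eta)\cdot \eta$ for a Lie group homomorphism $\chi_{g}\from \wh{LG}\to U(1)$; since $\wh{LG}$ is perfect (being a central extension of the perfect loop group $LG$), every such $\chi_{g}$ is trivial, and so $G$ acts trivially on $\wh{LG}$.

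With these identifications in place, the map $s\from G\to H$, $g\mapsto(1,g)$ is a Lie group homomorphism splitting the extension, and the $G$-action on $\wh{LG}$ induced through $s$ is trivial. I would then compute the standard $U(1)$-valued 3-cocycle of the crossed module using $s$: lift $\lambda(g_{1},g_{2}):=s(g_{1})s(g_{2})s(g_{1}g_{2})^{-1}\in\im(\tau)$ to an element of $\wh{LG}$ and assemble the resulting associator over $G^{3}$. Since $s$ is a group homomorphism, $\lambda\equiv 1$, and the lift may be taken to be constantly $1\in\wh{LG}$; the resulting 3-cocycle is therefore constantly $1\in U(1)$, showing that the class in $H^{3}_{\loc}(G,U(1))$ vanishes. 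The underlying $U(1)$-lifting bundle gerbe is then trivial by the standard identification of its Dixmier--Douady class with the characteristic class of the crossed module. The main obstacle I anticipate is carefully tracking the crossed module action under the splitting, since Proposition~\ref{prop:QG_is_no_Lie_group} only gives triviality of the underlying Lie group extension, and the additional crossed module datum has to be shown trivial via the perfectness input separately.
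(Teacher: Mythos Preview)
Your approach is essentially the same as the paper's: obtain a Lie group homomorphism $s\from G\to H$ splitting the extension $LG\to H\to G$, observe that the resulting $\delta_{s}\equiv 1$, and conclude that the $3$-cocycle built from the constant lift vanishes. The paper, however, does \emph{not} go through Proposition~\ref{prop:QG_is_no_Lie_group}; it invokes Proposition~\ref{prop:extensions_of_current_algebras} directly to get a Lie algebra section (semi-direct product structure suffices) and then integrates it to the group level. This is not merely cosmetic: Proposition~\ref{prop:QG_is_no_Lie_group} carries the additional hypothesis that $\fk$ have no $\mathfrak{sl}_{2}(\R)$-summand, which is \emph{not} implied by the hypotheses of the proposition as stated ($\fg$ real simple with $\fg_{\bC}$ complex simple allows $\fg=\mathfrak{sl}_{2}(\R)$). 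So as written your argument has a small gap in generality; it is easily repaired by using Proposition~\ref{prop:extensions_of_current_algebras} instead, since a homomorphic section is all you need.

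Your analysis of the $G$-action on $\wh{LG}$ via perfectness is correct but unnecessary for the conclusion: once the section is a homomorphism and $\delta_{s}\equiv 1$, the lift can be chosen identically $1\in\wh{LG}$, and every term of the $3$-cocycle (including the one involving the action) is then $1$ regardless of what the action does. The paper accordingly never discusses the action.
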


\begin{proof}
 From the assumptions it follows that $H$ gives rise to an extension
 \begin{equation*}
  \wh{LG}/U(1)\cong LG\to    H\to G
 \end{equation*}
 of $G$ by $\wh{LG}/U(1)\cong LG$. Since $LG$ and $G$ are regular Lie groups,
 so is $H$ \cite[Theorem
 V.1.8]{Neeb06Towards-a-Lie-theory-of-locally-convex-groups}. From Proposition
 \ref{prop:extensions_of_current_algebras} it follows that the associated Lie
 algebra extension has a Lie homomorphic section. This section integrates to a
 Lie group homomorphism $f\from G\to H$, which is a section of $H\to G$ by the
 uniqueness of the integrating Lie group homomorphism. The characteristic class
 in \cite[Lemma
 3.6]{Neeb07Non-abelian-extensions-of-infinite-dimensional-Lie-groups} is
 constructed from a locally smooth section $\sigma\from G\to H$ and the
 associated map
 \begin{equation*}
  \delta_{\sigma}\from G\times G\to LG,\quad (\gamma,\eta)\mapsto \sigma(\gamma)\sigma(\eta)\sigma(\gamma \eta)^{-1}.
 \end{equation*}
 If we take $\sigma=f$, then $\delta_{\sigma}$ vanishes, and thus all maps that
 are subsequently constructed also can be chosen to vanish. By \cite[Lemma
 3.6]{Neeb07Non-abelian-extensions-of-infinite-dimensional-Lie-groups} the
 characteristic class is independent of these choices and thus it vanishes.
 
 Since the Dixmier-Douady class of a lifting bundle gerbe with trivial
 principal bundle is also trivial, the last assertion follows from the
 existence of a continuous section $f\from G\to H$. 
\end{proof}

\begin{remark}
 The same proof as above shows that each smooth crossed module
 \begin{equation*}
  \tau\from  \wh{C^{\infty}(M,G)}\to H
 \end{equation*}
 with $\coker(\tau)\cong G$ has a trivial characteristic class in
 $H^{3}_{\loc}(G,Z(\wh{C^{\infty}(M,G)}))$. Moreover, the Dixmier-Douady class
 of the bundle gerbe associated to a crossed module with trivial
 characteristic class also vanishes by \cite[Proposition
 5.8]{WagemannWockel15A-cocycle-model-for-topological-and-Lie-group-cohomology}
 and \cite[Problem
 8.1(b)]{NeebWagemannWockel13Making-lifting-obstructions-explicit}.
\end{remark}

\begin{corollary}
 If $G$ is a compact, simple and 1-connected Lie group, then there does not
 exist a strict Lie 2-group model for the string group of $G$ that has $\wh{LG}$ as
 source-fibre.
\end{corollary}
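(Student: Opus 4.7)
The plan is to argue by contradiction, combining the preceding proposition with the definition of a strict Lie 2-group model for the string group. Suppose $\cG = (G_1 \toto G_0)$ is a smoothly separable strict Lie 2-group model of the string group of $G$ such that the source fibre $s^{-1}(1)$ is isomorphic to $\wh{LG}$. The associated smooth crossed module is then $\tau \from \wh{LG} \to H$ with $H = G_0$, and from the definition of a string 2-group model we have $\ker(\tau) \cong \ul{\pi}_1(\cG) \cong U(1)$ and $\coker(\tau) \cong \ul{\pi}_0(\cG) \cong G$.

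Next I would verify that the hypotheses of the preceding proposition are satisfied. Since $G$ is compact, simple and 1-connected, its Lie algebra $\fg$ is real simple and its complexification $\fg_\bC$ is complex simple, so the proposition applies and yields that the characteristic class of $\tau$ in $H^3_{\loc}(G, U(1))$ is trivial. Concretely, the proposition provides a Lie group homomorphic section $f \from G \to H$ of the underlying extension $LG \to H \to G$.

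The key step is then to identify this characteristic class with the Dixmier--Douady class of the underlying $U(1)$-lifting bundle gerbe over $G$. This identification is exactly what the last sentence of the preceding proposition records: the existence of a continuous (in fact smooth) section $f \from G \to H$ trivialises the principal $\im(\tau) = LG$-bundle $H \to G$, and a lifting bundle gerbe built from a trivial principal bundle has vanishing Dixmier--Douady class. Thus the Dixmier--Douady class of the bundle gerbe associated to $\cG$ vanishes in $H^3(G, \Z)$.

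However, by Definition~\ref{def:model}, a strict Lie 2-group model of the string group requires this Dixmier--Douady class to be a \emph{generator} of $H^3(G, \Z) \cong \Z$, which is nonzero. This contradiction shows that no such model exists. The main obstacle in the argument is really contained in the preceding proposition, whose proof rests on the rigidity of Lie algebra extensions of $L\fg$ by finite-dimensional semisimple Lie algebras together with the Lie-theoretic integration available because all groups involved are regular in the sense of Milnor; once that proposition is in hand, the corollary follows by the short matching of invariants described above.
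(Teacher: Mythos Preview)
Your argument is correct and follows exactly the approach the paper intends: the paper states the corollary without proof because it is immediate from the preceding proposition (trivial Dixmier--Douady class for any crossed module $\tau\from \wh{LG}\to H$ with $\ker(\tau)\cong U(1)$ and $\coker(\tau)\cong G$) together with Definition~\ref{def:model} (which requires that class to be a generator of $H^3(G,\Z)$). You have simply spelled out this matching of invariants, including the observation that compactness of $G$ ensures $\fg_\bC$ is complex simple so the proposition applies.
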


\section{A coherent Lie 2-group model from \texorpdfstring{$\wh{LG}$}{LG-hat}} %
\label{sec:a_coherent_lie_2_group_model_for_lg_}

\begin{tabsection}
 In this section we will show that there exists the next best thing to a strict
 Lie 2-group model for the string group, namely a \emph{coherent smooth}
 2-group model. 
 Recall from \cite[Definition
 19]{BaezLauda04Higher-dimensional-algebra.-V.-2-groups} that a coherent smooth
 2-group is a coherent 2-group in the category of smooth manifolds, i.e., a Lie
 groupoid, together with a smooth multiplication functor and smooth associator,
 left and right unit laws and unit and counit.
 Let us define a \emph{coherent Lie 2-group model} for the string group of $G$ analogously to Definition \ref{def:model}, replacing \emph{strict} by \emph{coherent}.

 We will construct this model by modifying the construction from
 \cite{BaezCransStevensonSchreiber07From-loop-groups-to-2-groups} slightly to
 the setting of smooth loops. This model will be a Lie subgroupoid  of $S(QG)$ and
 the main result of this section will be that the inclusion of this model into
 $S(QG)$ is an adjoint equivalence of Lie groupoids. From this one then then
 constructs a coherent Lie 2-group by transferring all the structure from this
 model to $S(QG)$.
 
 Unless mentioned otherwise, $G$ may be an arbitrary (possible
 infinite-dimensional) Lie group throughout this section.
\end{tabsection}

\begin{definition}[cf. \cite{CarMic}]
 To cure the problems that occur when multiplying two quasi-periodic paths we
 set
 \begin{equation*}
  Q_{\flat}G:=\{\gamma\in QG\mid \gamma^{(n)}(0)=\gamma^{(n)}(1)=0\text{ for all }n\geq 1 \},
 \end{equation*}
 where $\gamma^{(n)}$ denotes the $n$-th derivative of $\gamma$. For
 $\gamma,\eta\in Q_{\flat}G$ we set
 \begin{equation}\label{eqn:multiplication-of-quasi-periodic-paths}
  \gamma\cdot \eta :=(\left.\gamma\right|_{[0,1]}\cdot \left.\eta\right|_{[0,1]})\,\widetilde{\phantom{x}}, 
 \end{equation}
 where $\wt{\beta}$ denotes the extension of the smooth path
 $\beta\colon [0,1]\to G$ with vanishing derivatives at the end-points to a
 quasi-periodic path. This turns $Q_{\flat}G$ into a group, since a
 quasi-periodic path is uniquely determined by its restriction to $[0,1]$,
 where the axioms of a group are directly verified. Moreover, we set
 $L_{\flat}G:=Q_{\flat}G\cap LG$ and
 $\Omega_{\flat}G:=Q_{\flat}G\cap \Omega G$.  
\end{definition}

\begin{remark}\label{rem:submanifolds_of_mappings_with_vanishing_derivative}
 We will need to have various spaces of smooth maps with vanishing derivatives
 endowed with smooth structures. To this end, recall that if
 $\varphi\from U\se G\to \varphi(U)\se \fg$ is a chart of $G$ with
 $\varphi(e)=0$, then for each manifold $M$ (possibly with corners), the map
 \begin{equation}\label{eqn:chart_for_partly_flat_mappings}
  C^{\infty}(M,G) \supseteq C^{\infty}(M,U) \xrightarrow{\varphi_{*}} C^{\infty}(M,\varphi(U))\se C^{\infty}(M,\fg),\quad
  \gamma\mapsto \varphi \circ \gamma
 \end{equation}
 is a chart that determines uniquely a Lie group structure on $C^{\infty}(M,G)$
 \cite{Glockner02Lie-group-structures-on-quotient-groups-and-universal-complexifications-for-infinite-dimensional-Lie-groups}.
 Clearly, this chart preserves the property of maps to have vanishing
 derivative in certain fixed points and thus induced submanifold charts for
 those subspaces. For instance, $L_{\flat}G$ and $\Omega_{\flat}G$ are then
 submanifolds of $LG$ and $\Omega G$ with modelling spaces
 \[
  L_{\flat}\fg:=\{\gamma\in C^{\infty}(\R,\fg)\mid \gamma(t+1)=\gamma(t)\text{ and }\gamma^{(n)}(0)=\gamma^{(n)}(1)=0, \,
      \forall t\in\R,\,n\geq 1 \}
\]
  \noindent and 
  \[
 \Omega_{\flat}\fg:=\{\gamma\in L_{\flat}\fg\mid \gamma(0)=\gamma(1)=0 \},
 \]
 respectively. In particular, we have Lie subgroups $\Omega_{\flat}G  < \Omega G$ and $L_{\flat}G < LG$.
 
 Define $P_{\flat}G:=\{\gamma\in C^{\infty}([0,1],G)\mid \gamma^{(n)}(0)=\gamma^{(n)}(1)=0 \text{ for all }n\geq 1 \}$. Clearly, the map
 \begin{align}\label{eqn5}
  \begin{split}
  Q_{\flat}G &\to P_{\flat}G\\
    \gamma &\mapsto \left.\gamma\right|_{[0,1]}\end{split}
 \end{align}
 is a bijection (of sets) with inverse $\gamma\mapsto \wt{\gamma}$. Now
 $P_{\flat}G$ is a submanifold of $PG:=C^{\infty}([0,1],G)$, and thus it is in
 particular a Lie subgroup. We thus can use \eqref{eqn5} to endow $Q_{\flat}G$
 with a manifold structure that turns it into a Lie group for the group
 multiplication \eqref{eqn:multiplication-of-quasi-periodic-paths}.
\end{remark}

\begin{lemma}
 The following assertions hold.
 \begin{enumerate}
  \item The subset $Q_{\flat}G\se QG$ is a reduction of the principal
        $LG$-bundle $\Theta\from QG\to G$ to an $L_{\flat}G$-bundle
        $\Theta_{\flat}\from Q_{\flat}G\to G$. In particular, $Q_{\flat}G$ is a
        submanifold of $QG$. 
  \item The smooth structure defined on $Q_{\flat}G$ in Remark
        \ref{rem:submanifolds_of_mappings_with_vanishing_derivative} using
        (\ref{eqn5}) and the smooth structure induced on $Q_{\flat}G$ from $QG$
        coincide. In particular, it also is a Lie group with respect to the
        smooth structure induced from $QG$.
 \end{enumerate}
\end{lemma}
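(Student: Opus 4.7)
The plan for part (a) is to revisit the proof of Proposition~\ref{prop:QG_is_principal_LG_bundle} and observe that the local sections $x \mapsto \gamma_{g,x}$ used to build the trivialisations $\Phi_{F_g}$ already take values in $Q_{\flat}G$: the smoothed step function $\rho$ was chosen with all derivatives vanishing at $0$ and $1$, so the same holds for $\gamma_{g,x}$ at every integer. Moreover, the subset $Q_{\flat}G$ is stable under the right action of $L_{\flat}G$ by the Leibniz rule: for $\gamma \in Q_{\flat}G$ and $\eta \in L_{\flat}G$, all derivatives of $\gamma \cdot \eta$ vanish at integers. Consequently $\Phi_{F_g}$ restricts to a bijection $\Theta^{-1}(gU) \cap Q_{\flat}G \to gU \times L_{\flat}G$, and the collection of such restrictions equips $\Theta_{\flat}\from Q_{\flat}G \to G$ with the structure of a principal $L_{\flat}G$-bundle that is a reduction of $\Theta$. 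Since $L_{\flat}G$ is a Lie subgroup of $LG$ by Remark~\ref{rem:submanifolds_of_mappings_with_vanishing_derivative}, the total space $Q_{\flat}G$ acquires a submanifold structure in $QG$ from these reduction charts.

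For part (b), I would show the identity map between the Remark~\ref{rem:submanifolds_of_mappings_with_vanishing_derivative} structure on $Q_{\flat}G$ (pulled back along the bijection $Q_{\flat}G \to P_{\flat}G$, $\gamma \mapsto \gamma|_{[0,1]}$) and the submanifold structure inherited from $QG$ is a diffeomorphism, by expressing it in the two chart systems. In one direction, the trivialisation chart from part (a) sends $\gamma$ to $(\Theta(\gamma), \gamma_{\Theta(\gamma)}^{-1} \cdot \gamma) \in gU \times L_{\flat}G$, and the resulting chart transition reads $(x,\eta) \mapsto (\gamma_{g,x} \cdot \eta)|_{[0,1]}$ in the $P_{\flat}G$-model; this is smooth because the section $x \mapsto \gamma_{g,x}$ is smooth into $P_{\flat}G$ and point-wise multiplication in $P_{\flat}G$ is smooth. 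In the reverse direction one uses $\beta \mapsto \beta(1)\beta(0)^{-1}$ to recover the $G$-factor and $\beta \mapsto \gamma^{-1}_{\beta(1)\beta(0)^{-1}} \cdot \wt{\beta}$ to recover the $L_{\flat}G$-factor; both depend smoothly on $\beta \in P_{\flat}G$ by the same smoothness of $g \mapsto \gamma_{g,\cdot}$ and of multiplication. Once the two smooth structures are shown to coincide, the Lie group structure established in Remark~\ref{rem:submanifolds_of_mappings_with_vanishing_derivative} automatically transfers to the submanifold structure.

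The main technical obstacle is the rigorous justification of these smoothness statements in the Fr\'echet-manifold setting, in particular the smooth parameter dependence of $\gamma_{g,x}$ on $x \in gU$ viewed as an element of a mapping space. This is where one invokes the smooth exponential law: since $F_g\from [0,1] \times gU \to G$ in the proof of Proposition~\ref{prop:QG_is_principal_LG_bundle} is manifestly smooth jointly in both variables and sends $\{0,1\} \times gU$ to a single point with vanishing derivatives of all orders in the first variable, the exponential law directly yields smoothness of $gU \to P_{\flat}G$, $x \mapsto \gamma_{g,x}|_{[0,1]}$ (equivalently, $gU \to Q_{\flat}G$ in either smooth structure). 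This single input, together with the smoothness of point-wise multiplication and restriction on spaces of smooth paths, suffices to carry out both comparisons.
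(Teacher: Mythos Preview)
Your proposal is correct and follows essentially the same approach as the paper: for part (a) you use that the local sections $x\mapsto\gamma_{g,x}$ already land in $Q_{\flat}G$ and that fibre-wise differences lie in $L_{\flat}G$, exactly as the paper does; for part (b) you compare the two smooth structures by writing the composite $P_{\flat}G\supset\Theta_{\flat}^{-1}(gU)\to gU\times L_{\flat}G$ and its inverse explicitly and checking smoothness in both directions, which is precisely the paper's argument. The only cosmetic difference is that the paper isolates the observation that the periodic extension $\{\gamma\in P_{\flat}G\mid\gamma(0)=\gamma(1)\}\to L_{\flat}G$ is a diffeomorphism (being a linear homeomorphism in the chart $\varphi_*$), whereas you fold this into the general smoothness claims and instead single out the exponential-law justification for the smooth parameter dependence of $x\mapsto\gamma_{g,x}$; both emphases lead to the same verification.
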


\begin{proof}
 Since the sections $gU\to QG$, $x\mapsto \gamma_{x}$ constructed in Remark
 \ref{rem:QG} actually take values in $Q_{\flat}G$ and since two elements in
 $Q_{\flat}G$ in the same fibre of $\Theta$ differ by an element of $L_{\flat}G$ the
 first assertion is immediate.
 
 To check the second assertion, we first note that the map
 \begin{equation*}
  \{\gamma\in P_{\flat}G\mid \gamma(0)=\gamma(1) \}\to L_{\flat}G,\quad \gamma\mapsto \wt{\gamma}
 \end{equation*}
 is a diffeomorphism, since its coordinate representation in the chart
 $\varphi_{*}$ in (\ref{eqn:chart_for_partly_flat_mappings}) is a linear homeomorphism. If we also set
 $\Theta_{\flat}(\gamma):=\gamma(1)\cdot \gamma(0)^{-1}$ for $\gamma\in P_{\flat}G$, then
 $\Theta_{\flat}\from P_{\flat}G\to G$ is smooth since the evaluation maps are so.
 Consequently, the map 
 \begin{align}\label{eqn6}
  \{\gamma\in P_{\flat}G\mid \Theta(\gamma)\in gU \}& \xrightarrow{\gamma\mapsto \wt{\gamma}} \{\gamma\in Q_{\flat}G\mid \Theta(\gamma)\in gU \} \xrightarrow{\Phi_{F_{g}}} gU\times L_{\flat}G,\nonumber\\ \gamma & \mapsto (\Theta(\gamma), \gamma^{-1}_{\Theta(\gamma)}\cdot \wt{\gamma})
 \end{align}
 is smooth since
 $\gamma^{-1}_{\Theta(\gamma)}\cdot \wt{\gamma}=(\left.\gamma^{-1}_{\Theta(\gamma)}\right|_{[0,1]}\cdot {\gamma})\,\widetilde{\phantom{x}}$.  
 Likewise, the inverse
 $(x,\gamma)\mapsto \left.\gamma_{x}\right|_{[0,1]}\cdot \left.\gamma\right|_{[0,1]}$
 is smooth and thus \eqref{eqn6} is a diffeomorphism. This implies that the
 smooth structures coincide.
\end{proof}

 As in Remark \ref{rem:QG} we obtain from the $L_{\flat}G$-bundle
 $\Theta_{\flat}\from Q_{\flat}G\to G$ and the equivariant diffeomorphism
 $\Psi_{\flat}\from L_{\flat}G/G\xrightarrow{\cong} \Omega_{\flat}G$ the locally trivial
 $\Omega_{\flat}G$-bundle $Q_{\flat}G/G\to G$. The subgroup
 \[
  Q_{\flat,*}G:=\{\gamma\in Q_{\flat}G\mid \gamma(0)=e \}
\]
is a Lie subgroup of
 $Q_{\flat}G$ and the kernel of $\ev_{1}\from Q_{\flat,*}G\to G$ is $\Omega_{\flat}G$. Then
 $\Psi_{\flat}$ extends to a diffeomorphsim
 \begin{equation*}
  \Psi_{\flat}\from   Q_{\flat}G/G\to Q_{\flat,*}G,\quad 
  [\gamma]\mapsto \gamma\cdot \gamma(0)^{-1}.
 \end{equation*}
 This makes the diagram
 \begin{equation*}
  \vcenter{  \xymatrix{
  L_{\flat}G/G	 \xyhookrightarrow{1.8em}  \ar[d]^{\left.{\Psi_{\flat}}\right|_{L_{\flat}G/G}} & Q_{\flat}G/G\ar[d]^{\Psi_{\flat}} \ar[r]^{\Theta_{\flat}} & G\ar@{=}[d]& \\
  \Omega_{\flat} G \xyhookrightarrow{2.8em} & Q_{\flat,*}G\ar[r]^{\ev_{1}} & G.
  }}
 \end{equation*}
 commute and $\Psi_{\flat}$ is an isomorphism of locally trivial bundles. Note that
 \begin{equation*}
  \Omega_{\flat}G\hookrightarrow   Q_{\flat,*}G\xrightarrow{\ev_{1}} G
 \end{equation*}
 is in fact an extension of Lie groups, i.e., a sequence of Lie groups such
 that $Q_{\flat,*}G$  is a principal $\Omega_{\flat}G$-bundle over $G$. This is our model
 of the smooth path-loop fibration that we will use in the sequel.
 
 We now show that $Q_{\flat,*}G$ is (weakly) contractible. In fact, the restriction
 $\gamma\mapsto r(\gamma):=\left.\gamma\right|_{[0,1]}$ defines a homotopy
 equivalence $r\from \Omega G\to \Omega_{c}G$ and
 $\Omega_{\flat}G\hookrightarrow \Omega G$ is a homotopy equivalence as well (a
 homotopy inverse is for instance given by reparametrisation so that loops have
 vanishing derivatives at one point). Thus $r|_{\Omega_{\flat}G}\from \Omega_{\flat}G\to \Omega_{c}G$
 is a homotopy equivalence. From the Five Lemma, applied to the long exact
 sequence for homotopy groups of a fibration, it follows that
 \begin{equation*}
  r\from Q_{\flat,*}G\to C_{*}([0,1],G)
 \end{equation*}
 is a weak homotopy equivalence. 
 If $G$ is metrisable, then both manifolds
 are metrisable, and it follows from
 \cite{Palais66Homotopy-theory-of-infinite-dimensional-manifolds} that
 $Q_{\flat,*}G$ is contractible.

\begin{remark}
 Now suppose that $G$ is simple, compact and 1-connected. The central extension
 $\wh{LG}\to LG$ restricts to a central extension of the Lie subgroup
 $\Omega_{\flat}G\se LG$ which we denote by $\wh{\Omega_{\flat} G}$. The cocycle of the
 corresponding restricted Lie algebra extension is the restriction of the
 Kac-Moody cocycle to
 \begin{equation*}
  \kappa_{0}\from  \Omega_{\flat}\fg\times \Omega_{\flat}\fg\to \R,\quad (\mu,\nu)\mapsto \int_{0}^{1}\langle \mu(t),\nu'(t)\rangle dt.
 \end{equation*}
If $x.\gamma$ denotes the pointwise adjoint action
 of $Q_{\flat,*}G$ on $\Omega_{\flat}\fg$ from the right  we have
 $\kappa (x.\gamma,y.\gamma)=\kappa(x,y) + \alpha(\gamma, [x, y])$ 
 where 
  $$
 \alpha(\gamma, z) = \int_0^1 \langle z(t), \gamma'(t) \gamma(t)^{-1} \rangle dt 
 $$
for all $\gamma\in Q_{\flat,*}G$ and all
 $x,y, z \in \Omega_{\flat}\fg$. 
  Thus the right conjugation
 action of $Q_{\flat,*}G$ on $\Omega_{\flat}G$ lifts by \cite[Theorem
 V.9]{MaierNeeb03Central-extensions-of-current-groups}  to a \emph{unique} right action
 \begin{equation*}
  \lambda\from \wh{\Omega_{\flat}G}\times Q_{\flat,*}G\to \wh{\Omega_{\flat}G},\quad (\wh{\eta},\gamma)\mapsto \wh{\eta}.\gamma
 \end{equation*}
 such that $Q_{\flat,*}G$ acts trivially on the central
 $U(1)\se \wh{\Omega_{\flat}G}$. From the construction it follows that the
 composition
 $\tau \from \wh{\Omega_{\flat}G}\to \Omega_{\flat}G\hookrightarrow Q_{\flat,*}G$,
 together with the action $\lambda$ defines a smooth crossed module. This gives
 rise to a strict Lie 2-group
 $( \wh{\Omega_{\flat}G}\rtimes Q_{\flat,*}G  \toto Q_{\flat,*}G)$ with source map
 $s(\wh{\eta},\gamma)=\gamma$, target map $t(\wh{\eta},\gamma)=\eta\cdot\gamma$
 and composition map
 $m((\wh{\eta},\gamma),(\wh{\eta}',\gamma'))=(\wh{\eta}\cdot\wh{\eta}',\gamma')$
 (due originally to \cite{BrownSpencer76}, but see \cite[Example
 4.3]{NikolausSachseWockel13A-smooth-model-for-the-string-group}, \cite[Section
 5]{Porst08Strict-2-Groups-are-Crossed-Modules} or \cite[Proposition
 32]{BaezLauda04Higher-dimensional-algebra.-V.-2-groups} for recent treatments applicable to our context.). Note that
 this Lie 2-group is a smooth version of the Lie 2-group that has been
 constructed in
 \cite{BaezCransStevensonSchreiber07From-loop-groups-to-2-groups}, in the sense that it uses loops that are smooth maps $S^1\to G$.
\end{remark}

\begin{definition}\label{def:QstarflatQG}
 The strict Lie 2-group $( \wh{\Omega_{\flat}G}\rtimes Q_{\flat,*}G  \toto Q_{\flat,*}G)$
 from the preceding remark will be denoted by $S(Q_{\flat,*}G)$ and the smooth
 functor 
 $$
 S(Q_{\flat,*}G) \to S(QG)
 $$
  of Lie groupoids, given by the inclusions
 \begin{equation*}
  Q_{\flat,*}G\hookrightarrow QG\quad\text{ and }\quad \wh{\Omega_{\flat}G}\times Q_{\flat,*}G\hookrightarrow \wh{LG}\times QG,
 \end{equation*}
 will be denoted by $\iota\from S(Q_{\flat,*}G) \to S(QG)$.
\end{definition}

Recall that a weak equivalence of Lie groupoids $F\from X\to Y$ is a functor that is:

\begin{itemize}
 \item smoothly fully faithful, in the sense that
       \begin{equation*}
        \xymatrix{
        X_1 \ar[r]^{F_1} \ar[d]_{(s,t)} & Y_1 \ar[d]^{(s,t)} \\
        X_0\times X_0 \ar[r]_-{F_0\times F_0} & Y_1\times Y_1
        }
       \end{equation*}
       is a pullback, and:
 \item smoothly essentially surjective, in the sense that the map
       \begin{equation*}
        X_0\times_{Y_0,s} Y_1 \xrightarrow{\pr_2} Y_1 \xrightarrow{t} Y_0
       \end{equation*}
       is a surjective submersion.
\end{itemize}
While not our final and strongest result, the following theorem is sufficient to give $S(QG)$ the structure of a so-called stacky Lie 2-group (see for instance \cite{Schommer-Pries10Central-Extensions-of-Smooth-2-Groups-and-a-Finite-Dimensional-String-2-Group,Blohmann08Stacky-Lie-groups}).

\begin{Theorem}\label{thm:iota_is_weak_equiv}
Let $G$ be a simple, compact and 1-connected Lie group. The functor $\iota$ is a weak equivalence of Lie groupoids.
\end{Theorem}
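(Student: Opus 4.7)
The plan is to verify the two defining conditions of a weak equivalence of Lie groupoids separately, exploiting the principal bundle structures established in Section~\ref{sec:path_groups_loop_groups_and_central_extensions}.

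For smooth full faithfulness, we must show that the square
\[
\xymatrix{
\wh{\Omega_{\flat}G}\rtimes Q_{\flat,*}G \ar[r] \ar[d] & \wh{LG}\times QG \ar[d] \\
Q_{\flat,*}G\times Q_{\flat,*}G \ar[r] & QG\times QG
}
\]
is a pullback of manifolds. Set-theoretically, given $(\wh{\eta},\gamma)\in \wh{LG}\times QG$ with both source $\gamma$ and target in $Q_{\flat,*}G$, one recovers $\eta\in LG$ as the quotient of these two elements of $Q_{\flat,*}G$. Since both lie in $Q_{\flat,*}G$ (so their values at $0$ are $e$ and all derivatives of order $\geq 1$ vanish at $0$ and $1$), the Leibniz rule in $G$ forces $\eta(0)=\eta(1)=e$ together with the same derivative conditions, so $\eta\in\Omega_{\flat}G$ and consequently $\wh{\eta}\in\wh{\Omega_{\flat}G}$. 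To upgrade this to a smooth pullback, we use that $\ev_1\from Q_{\flat,*}G\to G$ (which agrees with $\Theta$ on $Q_{\flat,*}G$) is a principal $\Omega_{\flat}G$-bundle, so $Q_{\flat,*}G\times_G Q_{\flat,*}G\cong Q_{\flat,*}G\times \Omega_{\flat}G$ as manifolds; combined with the local triviality of the $U(1)$-bundle $\wh{LG}\to LG$, this identifies the fibered product in the square smoothly with $\wh{\Omega_{\flat}G}\rtimes Q_{\flat,*}G$.

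For smooth essential surjectivity, the fibered product $Q_{\flat,*}G\times_{QG,s}(\wh{LG}\times QG)$ identifies with $Q_{\flat,*}G\times\wh{LG}$, and the composite to $QG$ becomes $(\gamma,\wh{\eta})\mapsto\gamma\cdot\eta$. Surjectivity: given $\delta\in QG$, we construct $\gamma\in Q_{\flat,*}G$ with $\Theta(\gamma)=\Theta(\delta)$ by choosing a smooth path from $e$ to $\Theta(\delta)$ in the connected group $G$, reparametrizing with the step function $\rho$ from Proposition~\ref{prop:QG_is_principal_LG_bundle} to make all endpoint derivatives vanish, and extending quasi-periodically. Then $\eta:=\gamma^{-1}\delta\in LG$ lifts to some $\wh{\eta}\in\wh{LG}$, and $\gamma\cdot\eta=\delta$. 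The submersion property is verified locally: near $\delta_{0}\in QG$, pick a smooth local section $\sigma$ of the principal $\Omega_{\flat}G$-bundle $\ev_1$ around $\Theta(\delta_{0})$ and define $\gamma(\delta):=\sigma(\Theta(\delta))$, $\eta(\delta):=\gamma(\delta)^{-1}\delta$; lifting $\eta(\delta)$ smoothly via a local trivialization of $\wh{LG}\to LG$ produces a smooth local section of $(\gamma,\wh{\eta})\mapsto\gamma\cdot\eta$.

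The algebraic identifications in both steps are essentially automatic from the definitions; the main technical point is to establish smoothness and the existence of the required local splittings in the locally convex setting. These, however, follow routinely from the principal bundle structures $\Theta\from QG\to G$, $\Theta_{\flat}\from Q_{\flat}G\to G$, the extension $\Omega_{\flat}G\hookrightarrow Q_{\flat,*}G\to G$, and the central extension $\wh{LG}\to LG$, each of which admits the local trivializations we have used explicitly.
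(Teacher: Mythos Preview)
Your proof is correct. For smooth full faithfulness you do essentially what the paper does, only spelled out in more detail: the paper simply observes that $Q_{\flat,*}G\hookrightarrow QG$ is an inclusion and that $\wh{\Omega_{\flat}G}$ is by definition the pullback of $\wh{LG}$ along $\Omega_{\flat}G\hookrightarrow LG$, and declares the square to be ``clearly'' a pullback.

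For smooth essential surjectivity the two arguments diverge. You build explicit local sections of the map $\wh{LG}\times Q_{\flat,*}G\to QG$ by composing a local section of the principal $\Omega_{\flat}G$-bundle $\ev_1\from Q_{\flat,*}G\to G$, the smooth division map $QG\times_G QG\to LG$, and a local section of $\wh{LG}\to LG$. The paper instead factors as $\wh{LG}\times Q_{\flat,*}G\to LG\times Q_{\flat,*}G\to QG$ and isolates the nontrivial step as a general lemma: if $K\leq H$ is a Lie subgroup and $P_K\subset P$ is a reduction of a principal $H$-bundle, then $H\times P_K\to P$ is a surjective submersion (proved by reducing to the trivial bundle over a point, where it becomes the multiplication $H\times K\to H$). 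Your argument is more hands-on and avoids the detour through an auxiliary lemma; the paper's version packages the content into a reusable statement about bundle reductions. Note that your separate surjectivity paragraph is redundant, since your local-section construction already yields surjectivity.
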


\begin{proof}
 We first need to show that
 \begin{equation}\label{eqn:smooth-fully-faithful}
  \vcenter{  \xymatrix{
  \wh{\Omega_{\flat} G}\times Q_{\flat,*}G\ar[d]_{(s,t)} \ar[r]^{\iota_{1}} & \wh{LG}\times QG\ar[d]^{(s,t)}\\
  Q_{\flat,*}G\times Q_{\flat,*}G \ar[r]_-{\iota_{0}\times \iota_{0}} & QG\times QG
  }}
 \end{equation}
 is a pullback. But since $Q_{\flat,*}G$ is a subspace of $QG$ and
 $\wh{\Omega_{\flat}G}$ is the pullback of $\wh{LG}$ along
 $\Omega_{\flat}G \nbinto LG$ we have that \eqref{eqn:smooth-fully-faithful} is
 clearly a pullback.
 
 Secondly, we need to show that
 \begin{equation}\label{eqn:smooth-essential-surjective}
  Q_{\flat,*}G\times_{QG}  (\wh{LG}\times QG)\simeq \wh{LG}\times Q_{\flat,*}G  \to LG\times Q_{\flat,*}G \to QG
 \end{equation}
 is a surjective submersion. The right-most map in
 \eqref{eqn:smooth-essential-surjective} is the `action' of $LG$ on
 $Q_{\flat,*}G$, considered as a sub-bundle of $QG$. Since $\wh{LG} \to LG$ is
 a principal bundle, it is a surjective submersion, so we are reduced to
 showing the map $LG\times Q_{\flat,*}G \to QG$ is a surjective submersion.
 This is a case of the following setup, whose application to the case $H = LG$,
 $K = \Omega_{\flat}G$, $P = QG$ and $P_K = Q_{\flat,*}G$ finishes the proof.
\end{proof}
 
 \begin{lemma}
  Let $H$ be a Lie group and $K\leq H$ be a Lie subgroup. If $P\to M$ is a
  principal $H$-bundle that reduces to a principal $K$-bundle $P_{K}\to M$,
  then the map $a\from H\times P_K \to P$ (the $H$-action on $P$ restricted to
  $P_K$) is a surjective submersion.
 \end{lemma}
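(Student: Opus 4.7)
The plan is to verify surjectivity directly and then to localise, reducing the submersion claim to a simple manipulation of group multiplication in $H$.

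For surjectivity, fix $p\in P$ lying over $m\in M$. Because $P_K\to M$ is itself a principal $K$-bundle it is in particular surjective onto $M$, so we may pick $p_K\in P_K$ over $m$. Then $p$ and $p_K$ lie in the same fibre of $P\to M$, that is, in the same $H$-orbit, so there is a unique $h\in H$ with $p=p_K\cdot h=a(h,p_K)$.

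For the submersion property, the statement is local on source and target, so we may trivialise. The reduction datum provides an open cover of $M$ over which we may simultaneously trivialise $P$ as $U\times H$ and $P_K$ as $U\times K$ compatibly: picking a local section $s\from U\to P_K$ yields $\phi\from U\times H\to P|_U$, $(u,h)\mapsto s(u)\cdot h$, which restricts to the analogous $K$-trivialisation of $P_K$. In these coordinates $a$ reads
\begin{equation*}
H\times U\times K\to U\times H,\qquad (h,u,k)\mapsto(u,kh).
\end{equation*}
Up to a permutation of factors this is $\id_U\times\mu$, where $\mu\from K\times H\to H$, $(k,h)\mapsto kh$. The map $\mu$ factors as $\pr_2\circ\Phi$ for the diffeomorphism $\Phi\from K\times H\to K\times H$, $(k,h)\mapsto(k,kh)$ (with smooth inverse $(k,h)\mapsto(k,k^{-1}h)$). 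Since the projection $\pr_2\from K\times H\to H$ is a locally split submersion in the sense of the footnote, and this property is preserved by precomposition with diffeomorphisms and by taking products with the identity map, $a$ is a surjective submersion.

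The only potential subtlety is that we work in the infinite-dimensional locally convex category, where submersions must be locally split on charts rather than merely on tangent spaces. Both the existence of the compatible local trivialisations (immediate from the definition of a reduction of structure group, via a local section of $P_K$) and the stability of local-splitness under the elementary operations invoked are standard in this setting, so no genuine obstacle arises.
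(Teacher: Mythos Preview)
Your proof is correct and follows essentially the same route as the paper: localise using a section of $P_K$ to reduce to the multiplication map $K\times H\to H$ (the paper goes one step further and sets $M=\ast$), then factor this map as a projection composed with a shear diffeomorphism. Your separate surjectivity argument and the remark on locally split submersions in the infinite-dimensional setting are minor elaborations, but the core idea is identical.
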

 
 \begin{proof}
  We can reduce immediately to the case that $P$ (and hence $P_K$) is a trivial
  bundle, for a map is a surjective submersion if it is one locally on the
  codomain. We can without loss of generality consider $M=*$, and so $a$ is
  just
  \begin{equation*}
   H\times K \to H, \quad (h,k)\mapsto h\cdot k.
  \end{equation*}
  Since this maps factors through the diffeomorphism $H\times K\to H\times K$,
  $(h,k)\mapsto (h\cdot k,k)$ and the projection $\pr_{1}\from H\times K\to H$,
  it clearly is a submersion.
 \end{proof}

Weak equivalences of Lie groupoids give rise to equivalences of associated differentiable stacks on the site of manifolds in which the Lie groupoids live. 
Since forming the associated differentiable stack of a Lie groupoid preserves finite products, the strict 2-group structure on $S(Q_{\flat,*}G)$ gives a strict group stack structure on the associated stack. 
It is this 2-group structure that transfers across the equivalence of stacks to give the structure of a differentiable group stack\footnote{this is a structure equivalent to   what is meant by the phrase `stacky Lie 2-group'} on the stack associated to $S(QG)$.

However, there is little control over this group stack structure, so we would like to rigidify the situation, in the sense that multiplication will be given by a smooth functor, rather than a map of associated stacks (equivalently: by an anafunctor, or by a principal bibundle). 
We can even choose data, up to a known ambiguity, that will give an explicit multiplication functor, together with the rest of the coherent 2-group structure.

We shall do this by showing $S(QG)$ and $S(Q_{\flat,*}G)$ are not only weakly equivalent, but equivalent in the strong sense of there being an adjoint equivalence between them in the 2-category of Lie groupoids. 
Recall that having such an adjoint equivalence means there is a smooth functor $\iota\colon S(Q_{\flat,*}G)\to S(QG)$, a smooth functor $\rho\colon S(QG)\to S(Q_{\flat,*}G)$ and smooth natural isomorphisms $\varepsilon\colon \iota\circ \rho\Rightarrow \id_{S(QG)} $ and $\eta\colon \id_{S(Q_{\flat,*}G)} \Rightarrow \rho\circ\iota$.
These need to satisfy the triangle identities, which in this instance reduce to $\rho(\varepsilon_f) = \eta_{\rho(f)}^{-1}$ and $\iota(\eta_g) = \varepsilon_{\iota(g)}^{-1}$.
Further, since $\iota$ and $\rho$ are both fully faithful and essentially surjective, each triangle identity determines the other, and further, each of $\varepsilon$ and $\rho$ determines the other uniquely.

\begin{Theorem}\label{thorem:smooth_adj_equiv}
 Let $G$ be a simple, compact and 1-connected Lie group. Then the smooth functor
 \begin{equation*}
  \iota\from S(Q_{\flat,*}G) \to S(QG)
 \end{equation*}
 is part of a smooth adjoint equivalence of Lie groupoids.
\end{Theorem}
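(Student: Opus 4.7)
The strategy is to upgrade the weak equivalence $\iota$ to a smooth adjoint equivalence by constructing an explicit pseudo-inverse $\rho\colon S(QG)\to S(Q_{\flat,*}G)$ together with the unit and counit, using a reparametrisation-based smoothing of quasi-periodic paths. The only nontrivial input is a smooth lift through the central extension $\wh{LG}\to LG$, enabled by the vanishing of $H^{2}(QG;\Z)$.

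Choose a smooth monotone $\chi\colon[0,1]\to[0,1]$ with $\chi(0)=0$, $\chi(1)=1$, and $\chi^{(k)}(0)=\chi^{(k)}(1)=0$ for all $k\geq 1$. On objects, declare $\rho(\gamma)$ to be the unique quasi-periodic path satisfying $\rho(\gamma)(t)=\gamma(\chi(t))\gamma(0)^{-1}$ for $t\in[0,1]$; by the chain rule and the vanishing of all higher derivatives of $\chi$ at the endpoints, $\rho(\gamma)\in Q_{\flat,*}G$, and one has $\Theta(\rho(\gamma))=\Theta(\gamma)$, so $\rho(\gamma)$ and $\gamma$ lie in a common $LG$-orbit. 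The pointwise formula $\alpha(\gamma)(t):=\rho(\gamma)(t)^{-1}\gamma(t)$ then defines a smooth map $\alpha\colon QG\to LG$ with $\rho(\gamma)\cdot\alpha(\gamma)=\gamma$ pointwise, and a direct check confirms that $\alpha(\gamma)$ is indeed periodic.

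The main technical step is to lift $\alpha$ smoothly to $\wh\alpha\colon QG\to\wh{LG}$. The pullback $\alpha^{*}\wh{LG}$ is a principal $U(1)$-bundle over $QG$, classified topologically by an element of $H^{2}(QG;\Z)$. Using $\pi_{n}(QG)\cong\pi_{n}(G)$ from Section~1 together with $\pi_{1}(G)=\pi_{2}(G)=0$ (Cartan's theorem, since $G$ is compact and $1$-connected), Hurewicz and the universal coefficient theorem yield $H^{2}(QG;\Z)=0$. Hence the bundle is topologically trivial, and by \cite{MullerWockel07Equivalences-of-Smooth-and-Continuous-Principal-Bundles-with-Infinite-Dimensional-Structure-Group} smoothly trivial; any smooth global section provides the required $\wh\alpha$.

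With $\wh\alpha$ chosen, extend $\rho$ to morphisms by the formula forced by naturality of the prospective counit: for an arrow $(\wh\xi,\gamma)\colon\gamma\to\gamma'$ in $S(QG)$, set $\rho(\wh\xi,\gamma):=(\wh\alpha(\gamma)\,\wh\xi\,\wh\alpha(\gamma')^{-1},\rho(\gamma))$. A short calculation shows the image in $LG$ of the first coordinate equals $\rho(\gamma)^{-1}\rho(\gamma')\in\Omega_{\flat}G$, so this is an arrow in $S(Q_{\flat,*}G)$; smoothness and functoriality follow immediately from the formula. The $\wh\alpha(\gamma)$ assemble into the smooth natural isomorphism $\varepsilon\colon\iota\circ\rho\Rightarrow\id_{S(QG)}$ with components $\varepsilon_\gamma=(\wh\alpha(\gamma),\rho(\gamma))$. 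When $\gamma\in Q_{\flat,*}G$, the loop $\alpha(\gamma)$ already lies in $\Omega_{\flat}G$ (both $\gamma$ and $\rho(\gamma)$ have vanishing derivatives at the integers and the same $\Theta$-value), so $\wh\alpha(\gamma)\in\wh{\Omega_{\flat}G}$, and $\eta\colon\id_{S(Q_{\flat,*}G)}\Rightarrow\rho\circ\iota$ is defined by $\eta_\gamma:=(\wh\alpha(\gamma)^{-1},\gamma)$. The triangle identities then reduce to direct computations, e.g.\ $\varepsilon_{\iota\gamma}\circ\iota(\eta_\gamma)=(\wh\alpha(\gamma)^{-1}\wh\alpha(\gamma),\gamma)=\id_{\iota\gamma}$, and similarly for the other. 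The principal obstacle is the cohomological lifting in the third paragraph, which rests crucially on the vanishing $\pi_{2}(QG)=0$ established in Section~1.
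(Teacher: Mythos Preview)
Your proof is correct and complete. Both arguments use the same reparametrisation trick to flatten paths (your $\chi$ is the paper's $\varphi=F_{1}$), but differ in how the map $\alpha\colon QG\to LG$ is lifted through $\wh{LG}\to LG$. The paper produces, for each $\gamma$, an explicit path $s\mapsto \gamma\cdot(\gamma\circ F_{s})^{-1}$ in $P_{*}LG$ from the constant loop to $\alpha(\gamma)$; since $P_{*}LG$ is contractible, $\ev_{1}^{*}\wh{LG}$ has a smooth global section, and composing gives the lift. You instead pull back $\wh{LG}$ along $\alpha$ and kill its Chern class using $\pi_{2}(QG)\cong\pi_{2}(G)=0$, invoking Hurewicz, universal coefficients, and the smooth-versus-continuous comparison of \cite{MullerWockel07Equivalences-of-Smooth-and-Continuous-Principal-Bundles-with-Infinite-Dimensional-Structure-Group}. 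The paper's route is more self-contained and, crucially for the rest of Section~3, yields a lift whose dependence on the single datum $F$ is transparent, so that the coherence maps (\ref{eq:associator})--(\ref{eq:left_inverter}) can be written down explicitly; your cohomological lift $\wh\alpha$ is cleaner in principle but opaque in practice, since the trivialisation of $\alpha^{*}\wh{LG}$ is obtained abstractly. A minor organisational difference is that the paper packages everything as a smooth section of (\ref{eqn3}) and then invokes the standard construction of the adjoint (\cite{Borceux94Handbook-of-categorical-algebra.-1,Mac-Lane98Categories-for-the-working-mathematician}), whereas you write out $\rho$, $\varepsilon$, $\eta$ directly; these are equivalent, as the paper itself notes.
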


\begin{proof}
 We will show \eqref{eqn:smooth-essential-surjective} has a smooth section.
 From this the claim follows, since the standard construction of a left adjoint
 quasi-inverse
 \begin{equation*}
  \rho \from S(QG)\to S(Q_{\flat,*}G)
 \end{equation*}
 for the fully faithful and essentially surjective functor $\iota$ (see for
 instance \cite[Proposition~3.4.3]{Borceux94Handbook-of-categorical-algebra.-1}
 or \cite[Theorem~IV.4.1]{Mac-Lane98Categories-for-the-working-mathematician})
 then carry over verbatim to the smooth case. In fact, the choices of objects
 and morphisms that enter this construction of $\rho$ amount exactly to the
 choice of a section of
 \begin{equation} \label{eqn3}
  Q_{\flat,*}G\times_{QG}  (\wh{LG}\times QG)\to QG
 \end{equation}
 from \eqref{eqn:smooth-essential-surjective}. Moreover, the argument that
 there exist unique morphisms yields, together with the pull-back property of
 \eqref{eqn:smooth-fully-faithful}, a smooth functor. So in case this section
 is smooth the standard construction yields smooth functors and natural
 transformations.
 
 To construct a smooth section of \eqref{eqn3}, we choose
 $F\from [0,1]\times\R\to\R$ smooth such that $F_{0}=\id_{\R}$, that each
 $F_{t}\from \R\to \R$ satisfies $F_{t}(0)=0$ and $F_{t}(x+n)=F_{t}(x)+n$ for
 all $n\in \Z$ and such that $F_{1}\from \R\to \R$ has vanishing derivatives of
 all orders on $\Z$ (i.e., satisfies $F_{1}^{(n)}(k)=0$ for all $n\geq 1$ and
 $k\in\Z$). Such a $F$ can be obtained by first constructing $\varphi$ as the
 $\Z$-equivariant extension of $\left.\wt{\varphi}\right|_{[0,1]}$, where
 $\wt{\varphi}\from \R\to \R$ is smooth with
 $\left.\wt{\varphi}\right|_{(-\infty,0]}\equiv 0$,
 $\left.\wt{\varphi}\right|_{[1,\infty)}\equiv 1$. Such a function $\varphi$
 will be called a \emph{smooth staircase function}. With this, we can then set
 \begin{equation*}
  F_t(x):=(1-t)\cdot x+t\cdot \varphi(x).
 \end{equation*}
 For $F$ constructed this way we then have $\varphi=F_{1}$, and if $F$ is an
 arbitrary function with the above properties we define $\varphi:= F_{1}$. From
 this we obtain the smooth map $G_{F}\from QG\to P_*LG$  with
 \begin{equation*}
  G_{F}(\gamma)(s):=\gamma\cdot (\gamma \circ F_{s})^{-1}.
 \end{equation*}
 For each value of $s$, $\gamma \circ F_{s}$ is a reparametrisation of $\gamma$
 that leaves the base-point fixed and $\gamma \circ F_{0}=\gamma$. Thus
 $\gamma\cdot( \gamma \circ F_{s})^{-1}\in \Omega G\se LG$ and
 $\gamma\cdot( \gamma \circ F_{0})^{-1}\equiv e$, and $G_{F}(\gamma)$ is an
 element of $P_{*}L G$.
 
 In order to lift $G_{F}$ to $\wh{LG}$ we observe that $P_*LG$  
 is contractible, so that the pullback of the central extension $\wh{LG}\to LG$
 along the evaluation map $\ev_{1}\from P_{*}LG\to LG$ (cf.\ Remark
 \ref{rem:central-extension-of-the-full-loop-group}) has a smooth global
 section \cite[Section
 8]{Neeb02Central-extensions-of-infinite-dimensional-Lie-groups}. This gives
 rise to the diagram
 \begin{equation*}
  \xymatrix{
  \ev_{1}^{*}(\wh{LG})\ar[r]^(.6){\pi_{2}}\ar[d]_{\pi_{1}} & \wh{LG}\ar[d]^{q}
  \\
  P_{*}LG\ar[r]^{\ev_{1}}\ar@/_{1em}/[u]_{\sigma}&LG
  }
 \end{equation*}
 With this we now define the smooth map
 \begin{equation*}
  \tau\from QG\to \wh{LG}\times Q_{\flat,*}G ,\quad \gamma\mapsto ((\pi_{2}(\sigma(G_{F}(\gamma)))), (\gamma \circ \varphi)(0)^{-1}\cdot \gamma \circ \varphi).
 \end{equation*}
 In order to check that $\tau$ is a section of \eqref{eqn3} we observe that
 \begin{equation*}
  (q \circ \pi_{2} \circ \sigma \circ G_{F})(\gamma)= (\ev_{1}\circ \pi_{1} \circ \sigma \circ G_{F})(\gamma)=(\ev_{1} \circ G_{F})(\gamma)= \gamma \cdot (\gamma \circ \varphi)^{-1}\cdot (\gamma \circ \varphi)(0)
 \end{equation*}
 implies
 \begin{equation*}
  t(\pr_{1}( \tau(\gamma)))=\gamma \cdot (\gamma \circ \varphi)^{-1}\cdot(\gamma \circ \varphi)(0)\cdot(\gamma \circ \varphi)(0)^{-1}\cdot (\gamma \circ \varphi)=\gamma.
 \end{equation*}
 This finishes the proof.
\end{proof}

\begin{remark}
 A comment on the choice made in the previous proof. The only choice that
 enters the construction of the adjoint equivalence is the section
 \begin{equation*}
  \tau\from QG\to \wh{LG}\times Q_{\flat,*}G
 \end{equation*}
 of the map
 \begin{equation}\label{eqn7}
    \wh{LG}\times Q_{\flat,*}G  \to QG
 \end{equation}
from \eqref{eqn:smooth-essential-surjective}.
 The remaining structure is fixed by this, and in fact any two choices of sections give rise to uniquely isomorphic quasi-inverses to $\iota$. This means that there is a contractible groupoid worth of choices of sections.

 It is unclear to the authors what a reasonable topology on the spaces of sections of
 \eqref{eqn7} is, so it does not make sense at
 this point to say that this space of sections is contractible (as opposed to the \emph{groupoid} of sections). 
 However, what entered in
 the construction of $\tau$ in the previous proof was just the smooth map
 \begin{equation*}
  F\from [0,1]\times \R\to \R
 \end{equation*}
 such that $F_{0}=\id_{\R}$, that each $F_{t}\from \R\to \R$ satisfies
 $F_{t}(0)=0$ and $F_{t}(x+n)=F_{t}(x)+n$ for all $n\in \Z$ and such that
 $F_{1}\from \R\to \R$ has vanishing derivatives of all orders on the subset $\Z \subset \R$. This
 clearly is an affine space modelled on the vector space
 \[
  \{f\in C^{\infty}([0,1]\times S^{1},\R)\mid f_{s}(0)=0,\ f_{0}(t)=0,\ f^{(n)}_{1}(0)=0 \ 
  \text{ for all }s\in [0,1],\ t\in S^{1},n\geq 1 \}.
 \]
 In particular, the choices that enter into this particular natural construction of the
 adjoint equivalence form a contractible space.
\end{remark}

From the smooth equivalence $\iota\from S(Q_{\flat,*}G)\to S(QG)$ one can
construct on $S(QG)$ the structure of a smooth coherent 2-group. In fact the
2-category $\cohLie$ is just the 2-category of coherent group objects in
$\LieGrpd$. In particular, $S(Q_{\flat,*}G)$ is an object of $\cohLie$ (it
even is a strict Lie 2-group). Since $\iota\from S(Q_{\flat,*}G)\to S(QG)$ is
an equivalence in this strict 2-category we may transport all the structure
that is internal to $\LieGrpd$ through this equivalence.

More explicitly, we can define the multiplication in $S(QG)$ as follows. 
Note that the multiplication in $S(Q_{\flat,*}G)$ will be written as plain juxtaposition.
We first choose a quasi-inverse $\rho \from S(QG)\to S(Q_{\flat,*}G)$ as above and a construct the counit $\varepsilon\colon \rho \iota \Rightarrow \id$. 
Having fixed these choices, we set
\[
  f\otimes g = \iota[\rho(f)\rho(g)],\qquad f,g\in QG
\]
and define the monoidal unit $I := \iota(1)$, where $1$ is the monoidal unit in $S(Q_{\flat,*}G)$, namely the constant path at the identity.

From the counit $\varepsilon$, we define the associator $a$ of the 2-group $S(QG)$ as the composite
\begin{multline}\label{eq:associator}
  a_{f,g,h}\colon (f\otimes g)\otimes h = \iota[\rho(\iota[\rho(f)\rho(g)])\rho(h)] 
  \xrightarrow{\iota( \varepsilon_{\rho(f)\rho(g)}\rho(h)) }
  \iota[\rho(f)\rho(g)\rho(h)] \\
  \xrightarrow{\iota( \rho(f) \varepsilon^{-1}_{\rho(g)\rho(h)})}
  \iota[\rho(f)\rho(\iota[\rho(g)\rho(h)])] = f\otimes (g\otimes h).
  \end{multline}
Using both the counit and the unit $\eta$ (which at $f$ is the unique lift through $\rho$ of $\varepsilon_{\rho(f)}^{-1}\colon \rho(f) \to \rho(\iota(\rho(f)))$), the right ($r_f$) and left ($l_g$) unitors are:
\begin{equation}\label{eq:right_unitor}
  r_f \colon f\otimes I = \iota(\rho(f)\rho(\iota(1))) \xrightarrow{\iota(\id_{\rho(f)}\varepsilon_1)} \iota(\rho(f)1) = \iota(\rho(f)) \xrightarrow{\eta_f^{-1}} f
\end{equation}
and
\begin{equation}\label{eq:left_unitor}
  l_g \colon I \otimes g = \iota(\rho(\iota(1))\rho(g)) \xrightarrow{\iota(\varepsilon_1\id_{\rho(g)})}
  \iota(1\rho(g)) = \iota(\rho(g)) \xrightarrow{\eta_g^{-1}} g.
\end{equation}

\begin{lemma}\label{lemma:monoidal_adjoint_equivalence}
  The associator (\ref{eq:associator}), unit $I$ and unitors (\ref{eq:right_unitor}), (\ref{eq:left_unitor}) make $S(QG)$ into a monoidal Lie groupoid.
  Moreover, the adjoint equivalence of Lie groupoids from Theorem~\ref{thorem:smooth_adj_equiv} lifts to an adjoint equivalence between the monoidal Lie groupoids $S(QG)$ and $S(Q_{\flat,*}G)$.
\end{lemma}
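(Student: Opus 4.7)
The plan is to recognise this as an instance of the standard categorical fact that a monoidal structure on an object of a (2,1)-category with finite products can be transported along an adjoint equivalence, together with a verification that the resulting structure maps are smooth. Since $\LieGrpd$ has finite products and $S(Q_{\flat,*}G)$ is a strict monoidal object in $\LieGrpd$ (in fact a strict Lie 2-group), and Theorem~\ref{thorem:smooth_adj_equiv} provides a smooth adjoint equivalence $(\iota,\rho,\eta,\varepsilon)$ internal to $\LieGrpd$, the formal transport argument applies inside $\LieGrpd$. The first step is therefore to make explicit that the formulas in (\ref{eq:associator}), (\ref{eq:right_unitor}), (\ref{eq:left_unitor}) are exactly the ones produced by this transport, with the strict unit $1 \in S(Q_{\flat,*}G)$ mapping to the chosen monoidal unit $I = \iota(1)$.

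Next I would verify the axioms of a monoidal Lie groupoid, which splits into three checks. Naturality of $a$, $l$, $r$ is automatic, since each is defined by whiskering the natural isomorphisms $\eta$ and $\varepsilon$ with the (strict) multiplication functor of $S(Q_{\flat,*}G)$ and with $\iota$ and $\rho$; the composite of smooth natural transformations is again smooth. The pentagon identity is obtained by applying $\rho$ to both sides of the pentagon diagram in $S(QG)$: using $\iota\rho = \id$ up to the coherent iso $\varepsilon$ and the triangle identities $\rho(\varepsilon_f) = \eta_{\rho(f)}^{-1}$, $\iota(\eta_g) = \varepsilon_{\iota(g)}^{-1}$, the pentagon for $\otimes$ on $S(QG)$ reduces to the strict pentagon for juxtaposition on $S(Q_{\flat,*}G)$, hence holds. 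The triangle identity for $(a,l,r)$ reduces analogously to the fact that the strict unitors on $S(Q_{\flat,*}G)$ are identities.

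For the second assertion I would endow $\iota$ with the structure of a strong monoidal functor by taking the comparison $2$-cell
\[
  \iota_{a,b} \colon \iota(a)\otimes \iota(b) = \iota\bigl[\rho\iota(a)\,\rho\iota(b)\bigr] \xrightarrow{\;\iota(\eta_a^{-1}\,\eta_b^{-1})\;} \iota(ab),
\]
together with the identity on units $\iota(1) = I$; and dually endow $\rho$ with a strong monoidal structure via $\rho(f)\rho(g) \xrightarrow{\sim} \rho(f\otimes g)$, built from $\varepsilon$. Both are smooth because $\iota,\rho,\eta,\varepsilon$ are smooth. A direct calculation, using once more only the triangle identities and strict associativity on $S(Q_{\flat,*}G)$, shows $\iota$ and $\rho$ are strong monoidal, and that $\eta, \varepsilon$ upgrade to monoidal natural transformations. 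This promotes the smooth adjoint equivalence to a monoidal smooth adjoint equivalence between $S(Q_{\flat,*}G)$ and $S(QG)$.

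The main obstacle is bookkeeping rather than substance: there are several composites of whiskerings to manipulate and one must be careful that smoothness is preserved at each step. The only non-formal input is that $S(Q_{\flat,*}G)$, its strict multiplication, and the pieces $\iota,\rho,\eta,\varepsilon$ are all smooth, which is already established in Theorem~\ref{thorem:smooth_adj_equiv} and the construction preceding Definition~\ref{def:QstarflatQG}; once this is in place, no further differential-geometric argument is needed and the coherence diagrams close automatically from the triangle identities.
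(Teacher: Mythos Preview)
Your proposal is correct and follows essentially the same route as the paper: transport the strict monoidal structure along the adjoint equivalence, verify the coherence axioms by reducing them (via naturality of $\eta$, $\varepsilon$ and the triangle identities) to the trivial ones in $S(Q_{\flat,*}G)$, and then upgrade $\iota$, $\rho$, $\eta$, $\varepsilon$ to monoidal data. The one organisational difference is that the paper streamlines the second assertion by invoking Kelly's doctrinal adjunction \cite{Kelly74Doctrinal-adjunction}: once $\rho$ is exhibited as strong monoidal (which is immediate, since the associator and unitors were \emph{defined} so that $\rho$'s coherence hexagon and unit triangles commute), the monoidal structures on $\iota$, $\eta$, $\varepsilon$ come for free, and the invertibility of $\eta$, $\varepsilon$ forces $\iota$ to be strong rather than merely lax. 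Your plan to equip $\iota$ and $\rho$ separately and check that $\eta$, $\varepsilon$ are monoidal by hand works equally well but duplicates effort that Kelly's result absorbs.
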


\begin{proof}
  The proof that the required monoidal category coherence diagrams for $a$, $l$ and $r$ commute reduces to a routine calculation using repeated application of naturality for $\varepsilon$ and $\eta$.

  Our strategy for proving that $S(QG)$ and $S(Q_{\flat,*}G)$ are equivalent as monoidal Lie groupoids is via the following standard result: an adjunction between the categories underlying monoidal categories will lift to a \emph{lax monoidal} adjunction as soon as one of the functors lifts to a \emph{strong} monoidal functor \cite{Kelly74Doctrinal-adjunction}.
  This argument internalises to Lie groupoids without issue, as structure is being uniquely determined by existing data, with no choices or case analysis.
  We will in fact prove that $\rho$ is a strong monoidal functor for our choice of coherence data for $S(QG)$, and then $\iota$, $\varepsilon$ and $\eta$ will be monoidal, as needed.
  A priori, $\iota$ is only \emph{lax} monoidal, but as $\varepsilon$ and $\eta$ are invertible, the formulas defining the monoidal structure for $\iota$ in \cite[\S 2]{Kelly74Doctrinal-adjunction} give $\iota$ to be strong monoidal, which is what we want.

  Recall that a monoidal functor (see eg \cite[\S 2]{BaezLauda04Higher-dimensional-algebra.-V.-2-groups}) $F\colon C \to D$ requires the following diagram to commute:
  \[
    \xymatrix{
    (F(x)\otimes F(y)) \otimes F(z) \ar[r] \ar[d]_{a^D_{F(x),F(y),F(z)}} & F(x\otimes y) \otimes F(z) \ar[r] & F((x\otimes y)\otimes z) \ar[d]^{F(a^C_{x,y,z})}\\
    F(x) \otimes (F(y) \otimes F(z)) \ar[r] & F(x) \otimes F(y\otimes z) \ar[r] & F(x\otimes (y\otimes z))
    }
  \]
  where the unlabelled (invertible) arrows are coherence data for $F$. 
  Thus if $F$ is fully faithful, $a^C_{x,y,z}$ is determined by $a^D_{F(x),F(y),F(z)}$ and the coherence structure for $F$.
  We shall consider the case where $F=\rho\colon S(QG) \to S(Q_{\flat,*}G)$, and of course as $S(Q_{\flat,*}G)$ is strictly monoidal, its associator is the identity.
  Thus the associator (\ref{eq:associator}) for $S(QG)$ is determined uniquely by the choice of coherence structure for $\rho$, and this is in fact our definition of $a$ above, for a particular choice of isomorphism $\rho(f)\rho(g) \to \rho(f\otimes g)$, namely 
  \[
    \rho(f)\rho(g) \xrightarrow{\varepsilon^{-1}_{\rho(f)\rho(g)}} \rho(\iota(\rho(f)\rho(g)))  = \rho(f\otimes g).
  \]
  Additionally, unitor coherence for $\rho$ will determine the left and right unitors of $S(QG)$ uniquely (again, by full-faithfulness of $\rho$), and our unitors (\ref{eq:right_unitor}) and (\ref{eq:left_unitor}) were defined via the putative unitor coherence diagrams for $\rho$. A similar argument holds for the unit $I$ and the unit coherence for $\rho$.

  Thus, by virtue of the definitions of the coherence data for $S(QG)$, the functor $\rho$ admits coherence data making it a monoidal functor $S(QG) \to S(Q_{\flat,*}G)$ between monoidal groupoids.
  Then $(\rho,\iota)$ is an adjunction between monoidal groupoids, but $\eta$ and $\varepsilon$ are invertible, so it is a monoidal adjoint equivalence.
\end{proof}
 
Since arrows and 2-arrows in the 2-category of coherent 2-groups are defined to be monoidal functors and natural transformations between the underlying monoidal groupoids \cite[Definitions 3.2, 3.3]{BaezLauda04Higher-dimensional-algebra.-V.-2-groups}, once we show $S(QG)$ has a coherent weak inversion functor, then we have shown that it is equivalent to $S(Q_{\flat,*}G)$ as a coherent 2-group.

Now define the coherent inversion functor of $S(QG)$, using the strict inversion functor $(-)^{-1}$ of $S(Q_{\flat,*}G)$, as:
\[
  \overline{\;\cdot\;}\colon S(QG) \xrightarrow{\rho} S(Q_{\flat,*}G) \xrightarrow{(-)^{-1}} S(Q_{\flat,*}G)  \xrightarrow{\iota} S(QG)
 \]
and the left and right inverters to be:
\begin{equation}\label{eq:right_inverter}
  f\otimes \overline{f} = \iota(\rho(f)\rho(\iota(\rho(f)^{-1}))) \xrightarrow{\id_{\rho(f)}\varepsilon_{\rho(f)^{-1}}} \iota(\rho(f)\rho(f)^{-1})  = \iota(1) = I
  \end{equation}
and
\begin{equation}\label{eq:left_inverter}
  \overline{g} \otimes g = \iota(\rho(\iota(\rho(g)^{-1}))\rho(g)) \xrightarrow{\varepsilon_{\rho(g)^{-1}}\id_{\rho(g)}} \iota(\rho(g)^{-1}\rho(g))  = \iota(1) = I.
\end{equation}
Repeated use of naturality ensures that the inverters are coherent.
Since the equivalence of the Lie groupoids (over $G$) gives equality of the Dixmier--Douady classes of the associated bundle gerbes, the class of $S(QG)$ is a generator of $H^3(G,\mathbb{Z})$. 
Hence we have proved:

\begin{Theorem}\label{thm:coherent_2-group_structure}
  The Lie groupoid $S(QG)$ together the the coherence maps (\ref{eq:associator}), (\ref{eq:right_unitor}), (\ref{eq:left_unitor}), (\ref{eq:right_inverter}) and (\ref{eq:left_inverter}) is a coherent Lie 2-group, equipped with an adjoint equivalence to the strict Lie 2-group $S(Q_{\flat,*}G)$.
  Hence $S(QG)$ is a coherent model for the string group of $G$. 
\end{Theorem}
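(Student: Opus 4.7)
The plan is to assemble the pieces already established in the lead-up to the theorem. Lemma~\ref{lemma:monoidal_adjoint_equivalence} already supplies a monoidal Lie groupoid structure on $S(QG)$ (with associator and unitors given by \eqref{eq:associator}, \eqref{eq:right_unitor} and \eqref{eq:left_unitor}) and a monoidal adjoint equivalence to the strictly monoidal $S(Q_{\flat,*}G)$. Thus the remaining content of the theorem is: (i) the inversion functor $\overline{(\,\cdot\,)} := \iota \circ (-)^{-1} \circ \rho$ together with the inverters \eqref{eq:right_inverter} and \eqref{eq:left_inverter} upgrades $S(QG)$ to a coherent 2-group; (ii) the monoidal adjoint equivalence lifts to an equivalence of coherent Lie 2-groups; and (iii) $S(QG)$ satisfies the axioms of a coherent Lie 2-group model for the string group of $G$.

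For (i) and (ii) the strategy is transport of structure. In the strict 2-group $S(Q_{\flat,*}G)$ the strict inversion $g \mapsto g^{-1}$ together with the identity 2-cells verifies the two zig-zag (triangle) identities of a coherent 2-group trivially. The inverters \eqref{eq:right_inverter} and \eqref{eq:left_inverter} in $S(QG)$ were built from $\varepsilon$ and $\iota,\rho$ applied to the strict unit/counit equations; invoking repeatedly the naturality of $\varepsilon$ and $\eta$, together with the triangle identities $\rho(\varepsilon_f) = \eta_{\rho(f)}^{-1}$ and $\iota(\eta_g) = \varepsilon_{\iota(g)}^{-1}$ from Theorem~\ref{thorem:smooth_adj_equiv}, reduces the required zig-zags in $S(QG)$ to the trivial ones in $S(Q_{\flat,*}G)$. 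The same argument shows that the inverter squares commute with the monoidal coherence, so by the doctrinal-adjunction argument already used for Lemma~\ref{lemma:monoidal_adjoint_equivalence} (Kelly) the adjunction $(\rho,\iota,\eta,\varepsilon)$ lifts to an equivalence in $\cohLie$, as required.

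For (iii), we identify the invariants directly from the definition $S(QG) = (\wh{LG} \times QG \toto QG)$. The automorphisms of the identity object (the constant path at $e$) are precisely those $(\wh{\eta},1) \in \wh{LG} \times QG$ with $\eta = 1$, i.e.\ the central $U(1) \subseteq \wh{LG}$, giving $\ul{\pi}_1(S(QG)) \cong U(1)$. Since $LG$ acts on $QG$ with orbit space $G$ via $\Theta$ (Proposition~\ref{prop:QG_is_principal_LG_bundle}), we have $\ul{\pi}_0(S(QG)) \cong G$. Smooth separability follows from this same principal bundle together with the fact that $\wh{LG}\to LG$ is a principal $U(1)$-bundle, so both quotient maps are smooth principal bundles and in particular submersions. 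Finally, for the Dixmier--Douady class, observe that the adjoint equivalence $\iota$ is a morphism over $\id_G$ between the associated lifting bundle gerbes on $G$; a weak equivalence of Lie 2-groupoids over $G$ induces a stable isomorphism of the underlying bundle gerbes and hence equality of Dixmier--Douady classes. The gerbe coming from $S(Q_{\flat,*}G)$ is the straightforward smooth analogue of the Baez--Crans--Stevenson--Schreiber construction and thus has DD class a generator of $H^3(G,\Z)$; the same therefore holds for $S(QG)$. The only genuinely delicate step above is the last one, namely the compatibility of the equivalence with the underlying bundle gerbes, but this is standard once the smoothly-separable structure has been recorded explicitly.
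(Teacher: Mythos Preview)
Your proposal is correct and follows essentially the same route as the paper: Lemma~\ref{lemma:monoidal_adjoint_equivalence} supplies the monoidal adjoint equivalence, naturality of $\varepsilon$ and $\eta$ verifies the coherence of the inverters, and equality of Dixmier--Douady classes follows from the equivalence of Lie groupoids over $G$. Your write-up is in fact more explicit than the paper's, which does not spell out the identifications $\ul{\pi}_1(S(QG))\cong U(1)$, $\ul{\pi}_0(S(QG))\cong G$, or smooth separability.

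One small simplification worth noting: for your step (ii) you invoke doctrinal adjunction a second time to lift the monoidal equivalence to $\cohLie$. The paper instead observes (citing \cite[Definitions 3.2, 3.3]{BaezLauda04Higher-dimensional-algebra.-V.-2-groups}) that morphisms and 2-morphisms in $\cohLie$ are \emph{by definition} just monoidal functors and monoidal natural transformations between the underlying monoidal groupoids---no separate compatibility with the inversion data is required. Hence, once $S(QG)$ is known to possess a coherent weak inversion (your step (i)), the monoidal adjoint equivalence from Lemma~\ref{lemma:monoidal_adjoint_equivalence} is already an equivalence in $\cohLie$ with no further argument. Your appeal to doctrinal adjunction here is harmless but unnecessary.
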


 \begin{remark}
If one chooses a section of (\ref{eqn3}) using a function $F$ as in the proof of Theorem~\ref{thorem:smooth_adj_equiv} (say via the construction of an explicit smooth function $\varphi$ given there), then one can write out the coherence maps (\ref{eq:associator}), (\ref{eq:right_unitor}), (\ref{eq:left_unitor}), (\ref{eq:right_inverter}) and (\ref{eq:left_inverter}) explicitly in terms of $F$ (or of $\varphi$).
 \end{remark}

 Even better than   the previous theorem is the result that the 2-groupoid of all possible lifts of the equivalence $\iota$ of Lie groupoids to an equivalence of coherent 2-groups is contractible.
 That is: a contractible 2-groupoid of lits of a choice of coherent Lie 2-group structure on $S(QG)$ plus an adjoint equivalence of coherent Lie 2-groups with $S(Q_{\flat,*}G)$. 
 The approach we used above is \emph{one} way to construct a coherent Lie 2-group structure on $S(QG)$, but in fact \emph{any} way to build such a structure will be equivalent to the one we gave.
 The following argument is due to Chris Schommer-Pries \cite{SchommerPriesMO}.

 We first define the 2-groupoid of lifts. 
 The reader should keep in mind the forgetful 2-functor $\cohLie \to \LieGrpd$ that forgets the coherent 2-group structure. 
 Suppose that $U: \mathcal{A} \to \mathcal{C}$ is a 2-functor between 2-categories. 
 Let $x$ and $y$ be objects of $\mathcal{C}$, and $f\colon x \to y$ a fixed morphism. Suppose that we have a lift $Y \in \mathcal{A}$ of $y \in \mathcal{C}$, that is an object such that $U(Y) = y$ (in our case, $Y$ is $S(Q_{\flat,*}G)$, $y$ is the underlying Lie groupoid and $f$ is $\iota$). 

 We can define the 2-category $\Lift(f; Y)$ of lifts of $f$ relative to $Y$. It is the 2-category whose objects are arrows $F\colon X \to Y$ such that $U(F) = f$, whose morphisms are the obvious triangles (with 2-isomorphisms witnessing the commutativity of the triangle).
 The top edge of the triangle maps via $U$ to the identity morphism of $x$. 
 The 2-morphisms are maps of the top edges of triangles such that the obvious diagram commutes and such that the 2-morphisms map via $U$ to the identity 2-morphism of the identity 1-morphism of $x$. 

 \begin{lemma}[Schommer-Pries, \cite{SchommerPriesMO}]
 Let $U\colon \mathcal{A} \to \mathcal{C}$ be a 2-functor, and assume:
 \begin{enumerate}
 
  \item The 2-functor $U$ is \emph{conservative} in the following sense: a 1-morphism of $\mathcal{A}$ is an equivalence in $\mathcal{A}$ if and only if its image under $U$ is an equivalence in $\mathcal{C}$; a 2-morphism of $\mathcal{A}$ is an isomorphism if and only if its image under $U$ is an isomorphism; and two 2-morphisms of $\mathcal{A}$ are equal if and only if they are equal after applying $U$.

  \item One  can lift structure along equivalences and invertible transformations. 
  Meaning that if $f:x \to y$ is in $\mathcal{C}$ and $Y \in \mathcal{A}$ lifts $y$, then there exists \emph{some} lift of $f$ to $\mathcal{A}$. 
  Moreover if $\alpha\colon f \cong g$ is a 2-isomorphism in $\mathcal{C}$ and one can lift $f$ and $g$ to parallel arrows in $\mathcal{A}$, then there exists \emph{some} compatible lift of $\alpha$ to $\mathcal{A}$.

  \item The morphism $f$ is an equivalence.
 \end{enumerate}
 Then the 2-category $\Lift(f; Y)$ is contractible. 
 \end{lemma}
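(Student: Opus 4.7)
The plan is to verify that $\Lift(f;Y)$ is equivalent, as a 2-category, to the terminal 2-category (i.e.\ contractible). Since it is a 2-groupoid once we restrict attention to its invertible cells, this reduces to four standard conditions: (i) it is inhabited; (ii) for any two objects there is a 1-morphism between them; (iii) any two parallel 1-morphisms are connected by an invertible 2-morphism in $\Lift(f;Y)$; and (iv) any two parallel 2-morphisms in $\Lift(f;Y)$ coincide.

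The easy endpoints are (i) and (iv). For (i) one applies hypothesis (2) directly to $f \colon x \to y$ together with the given lift $Y$ of $y$, producing a lift $F \colon X \to Y$ with $U(F) = f$, hence an object $(X,F) \in \Lift(f;Y)$. For (iv), two parallel 2-morphisms of $\Lift(f;Y)$ both project under $U$ to the identity 2-morphism of $\id_x$; the conservativity clause of (1) then forces them to be equal.

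For (ii), given $(X_1,F_1),(X_2,F_2)$, note that $U(F_i)=f$ is an equivalence by hypothesis (3), so the clause of (1) that detects equivalences implies that each $F_i$ is an equivalence in $\mathcal{A}$. Pick a quasi-inverse $F_2^{\dagger} \colon Y \to X_2$ of $F_2$ and form $F_2^{\dagger}\circ F_1 \colon X_1 \to X_2$, whose image under $U$ is canonically 2-isomorphic to $\id_x$ via the image of the counit of $F_2\dashv F_2^{\dagger}$. Using (2) on this invertible transformation one strictifies to obtain a 1-morphism $G \colon X_1 \to X_2$ with $U(G) = \id_x$ together with a 2-iso $F_2 G \cong F_1$ whose image under $U$ is the identity; this is the desired morphism of $\Lift(f;Y)$. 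For (iii), given two parallel $(G_j,\alpha_j)$, $j=1,2$, their $U$-images are both $\id_x$, so the identity 2-iso between them may be lifted via (2) to some 2-iso $\beta \colon G_1 \Rightarrow G_2$, and compatibility with the $\alpha_j$ is secured by using that $F_2$ is an equivalence (hence faithful on 2-cells) combined with the conservativity clause of (1) applied to the obstruction 2-morphism, so that $\beta$ may be modified to respect the triangle data.

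The main obstacle is bridging the gap between the existence-up-to-isomorphism afforded by (2) and the strict equalities $U(G) = \id_x$, $U(\alpha)=\id_f$, and $U(\beta) = \id_{\id_x}$ that enter the very definition of $\Lift(f;Y)$. This is resolved by applying (2) twice---once to produce the underlying 1- or 2-cell, then once more to lift the correcting invertible transformation---and absorbing the ambiguity into the definition of the lift, with the uniqueness clauses of (1) ensuring the absence of spurious choices.
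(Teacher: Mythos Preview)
The paper does not actually prove this lemma; it attributes the argument to Schommer-Pries and cites the MathOverflow post \cite{SchommerPriesMO} without reproducing it. So there is no ``paper's own proof'' to compare against, and your write-up has to stand on its own.

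Your four-step outline (i)--(iv) is the right skeleton, and steps (i) and (iv) are fine. Step (iii) is in fact cleaner than you indicate: once hypothesis (b) produces a lift $\beta\colon G_1\Rightarrow G_2$ with $U(\beta)=\id_{\id_x}$, the compatibility $\alpha_2\cdot(F_2\beta)=\alpha_1$ follows \emph{immediately} from the injectivity-on-2-cells clause of (a), since both sides have $U$-image $\id_f$. No ``modification of $\beta$'' or appeal to faithfulness of $F_2$ is needed; your phrasing suggests extra work that is not there.

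The genuine soft spot is step (ii). Hypothesis (b) as literally stated only furnishes a lift of a 1-morphism with a \emph{prescribed target}, not with both endpoints prescribed; and it lifts a 2-isomorphism only once one already has parallel lifts of its source and target 1-morphisms. Your composite $F_2^{\dagger}F_1\colon X_1\to X_2$ lands in the right hom-category, but its $U$-image is only \emph{isomorphic} to $\id_x$, not equal to it, and you then need a parallel arrow $G\colon X_1\to X_2$ with $U(G)=\id_x$ before (b) will hand you the connecting 2-cell. Your closing paragraph (``apply (2) twice \ldots\ absorb the ambiguity'') is where this should be made precise, but as written it is a promissory note rather than an argument. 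What you actually need is a local-isofibration reading of (b): given a lift $F$ of $g$ and a 2-isomorphism $g\cong h$ in $\mathcal{C}$, there is a parallel lift $G$ of $h$ together with a lift of the 2-isomorphism. That is the intended content of ``lifting structure along invertible transformations'', and it is what makes (ii) go through; you should say so explicitly rather than leaving the strictification step as a sketch.
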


The conditions of this lemma are satisfied in the case of the 2-functor $\cohLie \to \LieGrpd$. 
Assumption a) is standard (using the result mentioned at the start of Lemma~\ref{lemma:monoidal_adjoint_equivalence}), assumption b) by Theorem~\ref{thm:coherent_2-group_structure} (and a straightforward extension to the case of 2-arrows), and assumption c) by Theorem~\ref{thorem:smooth_adj_equiv}. 
Thus the 2-category of lifts of $\iota$, and hence of the equivalence data, is contractible.

\section{Comments on 2-representations of \texorpdfstring{$S(QG)$}{S(QG)}}

Just as groups act on vector spaces, 2-groups act nontrivially on \emph{2-vector spaces}, categories with extra linear structure and properties; we will call such actions \emph{2-representations}.
Note that the concept of a \emph{categorical representation} of an ordinary group is a special case. 
The easiest type of 2-vector space to understand is that of Kapranov--Voevodsky, which is a category equivalent to $\Vect^n$ for some $n$, where $\Vect$ is the category of vector spaces. 
More generally, one can take a finite groupoid $\Gamma$, and consider the category of $\Gamma$-actions on vector spaces, or the category of $\Gamma$-actions on just finite-dimensional vector spaces. 
Recall that such an action is an $\Gamma_0 := \Obj(\Gamma)$-indexed collection of vector spaces $\{V_x\}_{x\in\Gamma_0}$ such that for an arrow $a\colon x\to y$ there is associated a functorial isomorphism $V_x \to V_y$. In other words, one has a functor $\Gamma \to \Vect$, and the category of such actions is the category $\Vect^\Gamma := \Cat(\Gamma,\Vect)$ of functors, which is a protoypical 2-vector space.
For discussion of and references for the wide variety of 2-vector spaces in the literature, see \cite{nLab-2-vector-space}; we shall only mention specific examples in the current discussion.

If $\Gamma$ is additionally a 2-group then every $x\in \Gamma_0$ gives an autoequivalence functor $(-) \cdot x \colon \Gamma \to \Gamma$, giving an autoequivalence $\Vect^\Gamma \to \Vect^\Gamma$ by precomposition.
Similarly, an arrow $a\colon x\to y$ in $\Gamma$ gives a natural transformation of such precomposition functors.
In this way, the 2-group $\Gamma$ acts on the category $\Vect^\Gamma$.
It does not matter whether $\Gamma$ is a strict 2-group or not, in this construction.

We can further analyse what is happening in the case that the underlying groupoid of $\Gamma$ is an action groupoid $S//K$ for a finite group $K$ acting on a set $S$ (this is in fact true for any strict 2-group, via the construction using crossed modules).
For any $K$-representation $\rho\colon K \to GL(V)$ there is an $S//K$-action $\widetilde\rho\colon S//K \to \Vect$, given by sending an object $p\in S$ to the vector space $V$ and sending an arrow $p \xrightarrow{k} p\cdot k$ to the isomorphism $\rho(k)\colon V_p = V \to V = V_{pk}$.
Another way to view this is that $\widetilde\rho$ is the composition $S//K \to \ast//K \xrightarrow{\rho} \Vect$.
This gives a functor $\Rep(K) \to \Vect^{S//K}$, and in fact makes $\Rep(K)$ a subcategory of $\Vect^{S//K}$.
Moreover, the action of $S//K$ on $\Vect^{S//K}$ restricts to an action on $\Rep(K)$, since actions of $S//K$ of the form $\widetilde\rho$ are stable under precomposition by the functors $(-)\cdot p\colon S//K \to S//K$.
Moreover, the monoidal category structures are respected by the inclusion $\Rep(K) \to \Vect^{S//K}$ and by the action of $S//K$, namely direct sums and tensor products of representations, and of $S//K$-actions.
We have thus shown that $\Rep(K)$ carries a canonical 2-representation by $S//K$.

We can repeat the above analysis in the case that $\Gamma$ is a coherent Lie 2-group. 
In this setting, a $\Gamma$-action is a vector bundle on $\Gamma_0$ together with a continuous action by the arrows of $\Gamma$ on the vector bundle.
Categories of vector bundles are then 2-vector spaces (in algebraic geometry one uses variants such as coherent or quasicoherent sheaves).
For our case of interest, we need to be able to choose the kind of vector bundles we allow, for instance Hilbert bundles with a restricted structure group (as in \cite{BCMMS02}, for example).
Let us generically denote any choice of category of vector bundles on a Lie groupoid $X$ by $\Vect(X)$, with the assumption that it is closed under direct sums and some kind of tensor product, and includes all trivial bundles.
For those coherent Lie 2-groups that have as underlying Lie groupoid an action groupoid $K//S$ (as $S(QG)$ does, namely $QG//\wh{LG}$) we get a smaller 2-representation as above on the category $\Rep(K)$ of representations of $K$. 
Or rather, various smaller 2-representations depending on what category of ordinary representations one chooses to start with.

In the case of the string 2-group, we can consider the category $\Rep^+(\wh{LG})$ of positive energy representations of $\wh{LG}$, which admits a functor to any category $\Vect_{\Hilb}(S(QG))$ of Hilbert vector bundles on $S(QG)$, as described above. Thus $\Rep^+(\wh{LG})$ carries a canonical 2-representation of our model $S(QG)$ of the string 2-group. Note that the choice of what kind of Hilbert bundles on the Lie groupoid $S(QG)$ one uses does not matter here, since by assumption we always include trivial bundles. Additionally, the fusion product of positive energy representations should be preserved by the inclusion $\Rep^+(\wh{LG}) \to \Vect_{\Hilb}(S(QG))$, for a reasonable definition of fusion product of Hilbert bundles on $S(QG)$ (investigating which would take us too far afield for the current article).

Recall that the celebrated theorem of Freed--Hopkins--Teleman relates the Verlinde ring (which has as underlying additive group the representation ring of positive energy representations of $\wh{LG}$) to the twisted $G$-equivariant $K$-theory of $G$ (the action is by conjugation). 
A `twist', in this instance, can be represented by a gerbe, and the string group\footnote{there are other 2-group extensions of $G$, classified by $k\in H^4(BG,\mathbb{Z})$, constructed using the level-$k$ extension of $LG$ in place of $\wh{LG}$ in the previous sections. The discussion below applies to those 2-groups as well.} presents one such gerbe.
Note that we are only working in the special case of the theorem corresponding to a simply-connected compact Lie group as proved in \cite{FreedHopkinsTeleman13Loop-groups-and-twisted-K-theory-II}.

The work of Bouwknegt--Carey--Mathai--Murray--Stevenson \cite{BCMMS02} constructs a model of twisted $K$-theory using \emph{bundle gerbe modules}, which are a special sort of Hilbert bundle on the Lie groupoid underlying a bundle gerbe\footnote{in particular, the Hilbert bundles have a restricted structure group, and the $\ast//U(1)$ subgroupoid acts by scalar multiplication}. 
Any reasonable category of bundle gerbe modules on $S(QG)$ that is rich enough to calculate the $K$-theory of $G$ twisted by the gerbe $S(QG)$ could be used in the construction of a 2-representation as before.
In principle, one could use the treatment of equivariant bundle gerbes in \cite{MRSV15}---since $S(QG)$ is a 2-group extension it admits the structure of a $G$-equvariant bundle gerbe---to find categories of \emph{equivariant} bundle gerbe modules.
This will give a smaller 2-representation $\Vect_{\Hilb}(S(QG))_G$, but one that should still contain the 2-representation $\Rep^+(\wh{LG})$, since the assignment $\rho \mapsto \widetilde\rho$, under all reasonable definitions of equivariance, should result in an equvariant bundle gerbe module. Thus, at least conjecturally, the 2-representation $\Rep^+(\wh{LG})$ of our model of the string 2-group could be the `fixed points' under the adjoint action of $G$ on the 2-representation $\Vect_{\Hilb}(S(QG))$.

\end{document}